\theoremstyle{plain}
\newtheorem{theorem}{Theorem}[section]
\newtheorem{lemma}[theorem]{Lemma}
\newtheorem{proposition}[theorem]{Proposition}
\newtheorem{corollary}[theorem]{Corollary}
\theoremstyle{definition}
\newtheorem{definition}{Definition}[section]
\newtheorem{example}{Example}[section]
\theoremstyle{remark}
\newtheorem{remark}{Remark}[section]
\newenvironment{proofB}{ \noindent \textit{Proof of the }\textbf{Theorem B}.}{\hfill$\square$}
\title{Classification of orbit closures in the variety of 4-dimensional symplectic Lie
algebras}
\def\blfootnote{\xdef\@thefnmark{}\@footnotetext}
\renewcommand\@date
\author{}
\begin{document}

\maketitle

\blfootnote{2010 \textit{Mathematics Subject Classification:} 17B99. \textit{Key words and phrases:} Lie algebras; Orbit closure problem; symplectic structures; almost  K\"{a}hler  structures.}


\begin{abstract}
The aim of this paper is to study the natural action of the real symplectic group, $\operatorname{Sp}(4, \mathbb{R})$, on the algebraic set of $4$-dimensional Lie algebras admitting symplectic structures and to give a complete classification of orbit closures.

We present some applications of such classification to the study of the Ricci curvature of left-invariant almost K\"{a}hler structures on four dimensional Lie groups.
\end{abstract}

\section{Introduction}

An almost K\"ahler manifold is a real manifold endowed with a Riemannian metric $g$, a symplectic form $\omega$ and an almost-complex structure $J$, which satisfy the \textit{compatibility condition}:
\begin{eqnarray*}
\omega(\cdot, \centerdot ) & = & g(J\cdot, \centerdot).
\end{eqnarray*}
If in addition the almost-complex structure $J$ is integrable, then $(M,g,\omega, J)$ is a K\"{a}hler manifold.

Let $(M,\omega)$ be a symplectic manifold. The set of all pairs $(g,J)$ such that $(M,g,\omega, J)$ is an almost K\"ahler manifold is usually denoted by
$\mathbf{AK}(M,\omega)$, and is called the space of all {$\omega$-compatible almost K\"{a}hler structures} (equivalently, the space of all {$\omega$-compatible metrics}), which is well known to be an infinite dimensional Frechet manifold and totally geodesic in the set of all Riemannian metrics on $M$ (see \cite{blair}).

The problem of finding distinguished Riemannian metrics in $\mathbf{AK}(M,\omega)$ has been intensively studied since 1969, when Samuel I. Goldberg proved that if the almost complex structure of an almost K\"{a}hler manifold commutes with the curvature operator, then it is integrable and conjectured that a compact almost-K\"{a}hler Einstein manifold is necessarily K\"{a}hler (\cite{Goldberg}). It is worth mentioning that Kouei Sekigawa proved that the conjecture is true if the scalar curvature is non-negative (\cite{Sekigawa}) and there are some positive partial results in dimension four under additional assumptions on the curvature (see the very nice survey \cite{ApostolovDraghici} and the references given there).

The aim of this work is to study left-invariant almost K\"{a}hler structures on four dimensional Lie groups. In this setting, the nature of the problem is inherently linear. We follow closely the point of view  developed by Jens Heber in \cite{Heber}: he varies \textit{``Lie algebra structures instead of scalar products''} to study left-invariant metrics on Lie groups. In our case, let ${\omega_{\tiny{\mbox{cn}}}} = e_1^{\ast}\wedge e_3^{\ast} + e_2^{\ast}\wedge e_4^{\ast} $  be
the \textit{canonical symplectic form} on $\mathbb{R}^{4}$ and let $\mathcal{L}_{\omega_{\tiny{\mbox{cn}}}}(\mathbb{R}^{4})$ denote the set of all the Lie brackets $\mu$
on $\mathbb{R}^{4}$ such that $(\mathbb{R}^{4}, \mu, {\omega_{\tiny{\mbox{cn}}}} )$ is a \textit{symplectic Lie algebra}. 
It is easy to show that the $\mbox{Sp}(\mathbb{R}^{4},{\omega_{\tiny{\mbox{cn}}}})$-orbit of $\mu$ under the natural action of $\mbox{Sp}(\mathbb{R}^{4},{\omega_{\tiny{\mbox{cn}}}})$ on $\mathcal{L}_{\omega_{\tiny{\mbox{cn}}}}(\mathbb{R}^{4})$ parametrizes all left-invariant metrics that are compatible with $(\mathbb{R}^{4}, \mu,{\omega_{\tiny{\mbox{cn}}}} )$.

Given a four-dimensional symplectic Lie algebra $(\mathbb{R}^{4}, \mu, {\omega_{\tiny{\mbox{cn}}}} )$, we classify all Lie algebras whose Lie bracket is in the closure of the orbit $\mbox{Sp}(\mathbb{R}^{4},{\omega_{\tiny{\mbox{cn}}}}) \cdot \mu$ (with respect to the subspace topology on $\mathcal{L}_{\omega_{\tiny{\mbox{cn}}}}(\mathbb{R}^{4})$). In other words, we classify all \textit{degenerations} of Lie algebras of dimension $4$ admitting a symplectic structure with respect to the natural action of the symplectic group $\mbox{Sp}(4,\mathbb{R})$: \textbf{Theorem A}.

Originally, the study of degenerations of linear algebraic structures, mainly in the case of associative algebras or Lie algebras, was widely stimulated by applications in Mathematical physics, Geometry and Algebra. For instance, in his seminal paper on left invariant metrics on Lie groups, John Milnor used (implicitly) degenerations of Lie algebras to prove existence the Riemannian metrics on Lie groups with certain curvature properties (see, e.g., \cite[Theorem 2.5]{Milnor}). As far as we are concerned, by using \textbf{Theorem A}, we can show that:\newline

\noindent \textbf{Theorem B} $ $ \textit{Let $G$ be a four-dimensional Lie group admitting a left-invariant symplectic structure. Then $G$ admits a left-invariant almost-K\"{a}hler structure with non-degenerate Ricci curvature (of signature $(+,-,-,-)$) if and only if the Lie algebra of $G$ is not isomorphic to the $\mathfrak{h}_{3} \times \mathbb{R}$, $\mathfrak{aff}(\mathbb{R})\times \mathbb{R}^2$ or the four dimensional abelian Lie algebra}.\newline

Here, $\mathfrak{h}_{3}$ denotes the $3$-dimensional Heisenberg Lie algebra and $\mathfrak{aff}(\mathbb{R})$ is the Lie algebra of the Lie group of affine motions of $\mathbb{R}$.

Our study is based on the classification of four-dimensional symplectic Lie algebras obtained by Gabriela Ovando in \cite{ovando}
and to reduce the length of our exposition, we use results given in \cite{nesterenko} by Maryna Nesterenko and Roman Popovych about degenerations of 4-dimensional real Lie algebras. Also, we introduce new symplectic invariants for symplectic Lie algebras which behave well under degeneration, and are easily calculable.

\section{Preliminaries}

\begin{definition}[Symplectic Lie algebras]
Let $\mathfrak{g}$ be a $2n$-dimensional real Lie algebra. A \textit{symplectic structure} on $\mathfrak{g}$ is a closed non-degenerate $2$-form $\omega \in \Lambda^{2}(\mathfrak{g})$, i.e.:

\begin{itemize}
  \item (Closed) $\operatorname{d}\!\omega\,(v_{1},v_{2},v_{3}) = \displaystyle \sum_{\sigma \in \operatorname{S}_3} \operatorname{sign}(\sigma) \, \omega( [v_{\sigma(1)}, v_{\sigma(2)}], v_{\sigma(3)}) =0 $,\newline for all $v_1,v_2,v_3 \in \mathfrak{g}$. Here $\operatorname{S}_3$ denotes the symmetric group of degree $3$.
  \item (Non-degenerate) If $\omega(v,x)=0$ for all $x \in \mathfrak{g}$, then $v$ must be the zero vector of $\mathfrak{g}$, or equivalently $\omega^{n}$ is an \textit{orientation form} for $\mathfrak{g}$ (see for instance \cite[Proposition 22.8]{Lee}).
 \end{itemize}
and the pair $(\mathfrak{g},\omega)$ is called a \textit{symplectic Lie algebra}.
\end{definition}

\begin{definition}[Equivalence of symplectic structures]
 Let $(\mathfrak{g}_{1}, \omega_{1})$ and  $(\mathfrak{g}_{2}, \omega_{2})$ be two symplectic Lie algebras. Then, they are said to be \textit{symplectomorphically equivalent} if there exists an isomorphism of Lie algebras $g:\mathfrak{g}_{1} \rightarrow \mathfrak{g}_{2}$ such that $g$ preserves the symplectic structures; i.e. the pullback $g^{\ast} \omega_{2}$ is equal to $\omega_{1}$.
\end{definition}

\begin{remark}
The notion of equivalence of symplectic structures allows to consider the problem of classifying all possible symplectic structures on a fixed Lie algebra $\mathfrak{g}$ (up to symplectic automorphisms); such set is parametrized by the moduli space $(Z^{2}(\mathfrak{g}) \cap \Omega^{2}(\mathfrak{g}))/\operatorname{Aut}(\mathfrak{g})$, where $Z^{2}(\mathfrak{g})$ is the vector space of $2$-cocycles on $\mathfrak{g}$ with values in the trivial module $\mathbb{R}$, and $\Omega^{2}(\mathfrak{g})$ is the open set of non-degenerate $2$-forms on $\mathfrak{g}$ and $\operatorname{Aut}(\mathfrak{g})$ is acting via \textit{pullback} on $\Lambda^{2}(\mathfrak{g})$ (see for instance \cite[\S 2.1.]{moscosoTamaru}).
\end{remark}

\subsection{The variety of symplectic Lie algebras}

Suppose $V$ and $W$ are finite-dimensional real vector spaces, and let $L^{k}(V;W)$ denote the vector space of all multilinear maps from $V^{k}$, the $k$-fold Cartesian product of $V$, to $W$, and let $C^{k}(V;W) \subseteq L^{k}(V;W)$ denote the subspace consisting of all multilinear maps that are \textit{alternating}; i.e. $\mu \in L^{k}(V;W)$ such that $\mu(v_{\sigma(1)}, v_{\sigma(2)}, \ldots , v_{\sigma(k)})$ is equal to $\operatorname{sign}(\sigma) \, \mu(v_1,v_2, \ldots, v_k)$ for each permutation $\sigma \in \operatorname{S}_{k}$.

The general linear group of $V$, $\operatorname{GL}(V)$, acts on $L^{k}(V;V)$ and $L^{k}(V;\mathbb{R})$ by \textit{change of basis}: for all $g\in \operatorname{GL}(V)$, $\mu \in L^{k}(V;V)$ and $\alpha \in L^{k}(V;\mathbb{R})$
\begin{eqnarray*}
  g\cdot\mu(\cdot,\ldots,\cdot) &:=& g\mu (g^{-1}\cdot , \ldots , g^{-1}\cdot), \mbox{ and }\\
  g\cdot\alpha(\cdot,\ldots,\cdot)&:=& \alpha(g^{-1}\cdot , \ldots , g^{-1}\cdot).
\end{eqnarray*}

Recall that if $(V,\omega)$ is a symplectic vector space, then there exists a basis for $V$, say $\mathcal{B} = \{e_1, \ldots, e_n, e_{n+1}, \ldots, e_{2n}\}$, such that:
$$ \omega = \sum_{i=1}^{n} e_{i}^{\ast}\wedge e_{n+i}^{\ast},$$
where $\mathcal{B}^{\ast} = \{e_1^{\ast}, \ldots, e_n^{\ast}, e_{n+1}^{\ast}, \ldots, e_{2n}^{\ast}\} $ denotes the corresponding dual basis for $V^{\ast}$ (see for instance \cite[Proposition 22.7.]{Lee}). Such a basis is called a \textit{symplectic basis} for  $(V,\omega)$.

From now on we assume that $V$ is $\mathbb{R}^{2n}$ and let us denote by $\omega_{\operatorname{cn}}$ the \textit{canonical symplectic form} on $\mathbb{R}^{2n}$, $\omega_{\operatorname{cn}}:= e_{1}^{\ast}\wedge e_{n+1}^{\ast} + \ldots + e_{n}^{\ast}\wedge e_{2n}^{\ast}$. The \textit{real symplectic group}, denoted by $\operatorname{Sp}(2n, \mathbb{R})$, is the classical group defined as the set of all those linear endomorphisms of $\mathbb{R}^{2n}$ which preserve $\omega_{\operatorname{cn}}$ and whose Lie algebra, $\operatorname{Lie}(\operatorname{Sp}(2n, \mathbb{R}))$, is therefore the set
$$
{\mathfrak{sp}}(2n, \mathbb{R}) := \{T:\mathbb{R}^{2n} \rightarrow \mathbb{R}^{2n} : \omega_{\operatorname{cn}}(Tu,v) + \omega_{\operatorname{cn}}(u,Tv) = 0 \mbox{ for all } u,v \in \mathbb{R}^{2n}\}.
$$
Let us denote by $C^{2}_{ \omega_{\operatorname{cn}}}(\mathbb{R}^{2n};\mathbb{R}^{2n})$ the vector space $\left\{\mu \in C^{2}(\mathbb{R}^{2n};\mathbb{R}^{2n})  : \operatorname{d}_{\mu}\!\omega_{\operatorname{cn}}= 0 \right\}$, where $\operatorname{d}_{\mu}\!\omega_{\operatorname{cn}}$ is the $3$-form on $\mathbb{R}^{2n}$
defined by the formula
$$
\operatorname{d}_{\mu}\!\omega_{\operatorname{cn}}(v_1,v_2,v_3) =
\displaystyle \sum_{\sigma \in \operatorname{S}_3} \operatorname{sign}(\sigma) \, \omega_{\operatorname{cn}}( \mu(v_{\sigma(1)}, v_{\sigma(2)}), v_{\sigma(3)}).
$$
From this set, we only take all bilinear products on $\mathbb{R}^{2n}$ that endow $\mathbb{R}^{2n}$ with a structure of Lie algebra; we mean the set:
\begin{eqnarray*}
  \mathcal{L}_{\omega_{\tiny{\mbox{cn}}}}(\mathbb{R}^{2n}) &:=& \left\{ \mu \in C^{2}_{\omega_{\tiny{\mbox{cn}}}}(\mathbb{R}^{2n};\mathbb{R}^{2n}) : \operatorname{Jac}(\mu) = 0 \right\}\\
    &=&  \left\{ \mu \in C^{2}(\mathbb{R}^{2n};\mathbb{R}^{2n}) : \operatorname{Jac}(\mu) = 0 \mbox{ and } \operatorname{d}_{\mu}\omega_{\tiny{\mbox{cn}}}=0 \right\}.
\end{eqnarray*}
Here, $\operatorname{Jac}(\mu)$ stands for the alternating trilinear map defined by $$\operatorname{Jac}(\mu)(v_1,v_2,v_3):=\displaystyle \sum_{\sigma \in \operatorname{S}_3} \operatorname{sign}(\sigma) \, \mu( \mu(v_{\sigma(1)}, v_{\sigma(2)}), v_{\sigma(3)});$$
and then unwinding the definition, $\operatorname{Jac}(\mu) = 0$ is the \textit{Jacoby indentity}.

We call $ \mathcal{L}_{\omega_{\tiny{\mbox{cn}}}}(\mathbb{R}^{2n})$ the algebraic set of symplectic Lie algebras, which is a $\operatorname{Sp}(2n, \mathbb{R})$-invariant subset of $C^{2}_{\omega_{\tiny{\mbox{cn}}}}(\mathbb{R}^{2n};\mathbb{R}^{2n})$. The set $\mathcal{L}_{\omega_{\tiny{\mbox{cn}}}}(\mathbb{R}^{2n}) /\operatorname{Sp}(2n, \mathbb{R})$ parametrizes the family of all Lie algebras of dimension $2n$ admitting symplectic structures (up to symplectic isomorphism).

We will consider the Hausdorff vector topology on $C^{2}_{\omega_{\tiny{\mbox{cn}}}}(\mathbb{R}^{2n};\mathbb{R}^{2n})$ and the respective subspace topology on $\mathcal{L}_{\omega_{\tiny{\mbox{cn}}}}(\mathbb{R}^{2n})$ to set up the notion of degeneration. Given a subset $S$ of $C^{2}_{\omega_{\tiny{\mbox{cn}}}}(\mathbb{R}^{2n};\mathbb{R}^{2n})$, we denote by $\overline{S}$ the closure of $S$  with respect to the topology mentioned.

\begin{definition}[Degeneration]
Let $\mu$ and $\lambda$ be two Lie algebra laws in $\mathcal{L}_{\omega_{\tiny{\mbox{cn}}}}(\mathbb{R}^{2n})$. We say that the symplectic Lie algebra \textit{$(\mathbb{R}^{2n}, \mu, {\omega_{\tiny{\mbox{cn}}}} )$ degenerates to $(\mathbb{R}^{2n}, \lambda, {\omega_{\tiny{\mbox{cn}}}} )$ with respect to $\operatorname{Sp}(2n, \mathbb{R})$}, if $\lambda \in \overline{\operatorname{Sp}(2n, \mathbb{R})\cdot \mu}$. Moreover, when $\lambda \notin \operatorname{Sp}(2n, \mathbb{R})\cdot \mu$, then we say that the degeneration is \textit{proper}.
\end{definition}

So, our problem is as follows: given two Lie brackets $\mu$ and $\lambda$ in $\mathcal{L}_{\omega_{\tiny{\mbox{cn}}}}(\mathbb{R}^{2n})$, we need to determine, in an elementary and easily verifiable way, whether $\lambda \in \overline{\operatorname{Sp}(2n, \mathbb{R})\cdot \mu}$ or not. In the standard context of degenerations of (Lie) algebras, in which we must study the closure of $\operatorname{GL}(m, \mathbb{R})$-orbits, it is very usual that familiar lists of invariants of (Lie) algebras are used to face this problem (see, for instance, \cite[Theorem 1]{nesterenko}). In our case, we need to introduce invariants which behave well under degeneration with respect to the action of $\operatorname{Sp}(2n, \mathbb{R})$. One such symplectic invariant follows from the well-known \textit{closed orbit lemma} of Armand Borel (see \cite[Proposition 15.4]{Borel1}) and the respective translation to the real setting due to Michael Jablonski:

\begin{proposition}[{\cite[Proposition 3.2-(b)]{Jablonski1}}]\label{orden}
Let $G$ be a real reductive algebraic group acting linearly and rationally on a real vector space $V$. Then $\partial(G \cdot v) = \overline{G\cdot v} \setminus G\cdot v$ consists of $G$-orbits of strictly smaller dimension.
\end{proposition}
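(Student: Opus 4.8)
The plan is to reduce the statement to an elementary dimension count and then transport it to the complex setting, where the classical closed orbit lemma \cite[Proposition 15.4]{Borel1} applies directly.

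First I would observe that, since $G$ acts continuously on $V$ and $G\cdot v$ is $G$-invariant, its closure $\overline{G\cdot v}$ is $G$-invariant as well; hence $\partial(G\cdot v)=\overline{G\cdot v}\setminus G\cdot v$ is a union of $G$-orbits, and it remains to prove that every orbit $G\cdot w$ contained in it satisfies $\dim_{\mathbb{R}}(G\cdot w)<\dim_{\mathbb{R}}(G\cdot v)$. I would also recall the standard facts that each orbit $G\cdot w$ is a semialgebraic subset of $V$ (the image of the real algebraic set $G$ under the polynomial map $g\mapsto g\cdot w$), that it is locally closed — an orbit always contains a relatively open subset of its closure, and by homogeneity it is then itself relatively open in that closure — hence an embedded smooth submanifold of $V$, of dimension $\dim_{\mathbb{R}}G-\dim_{\mathbb{R}}G_{w}$, where $G_{w}$ is the stabilizer of $w$.

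Next I would complexify: let $G^{\mathbb{C}}$ be the complexification of $G$, acting on $V_{\mathbb{C}}=V\otimes_{\mathbb{R}}\mathbb{C}$. For a real point $w\in V$ the stabilizer $(G^{\mathbb{C}})_{w}$ is defined by the same equations as $G_{w}$, so it is the complexification of $G_{w}$ and $\dim_{\mathbb{C}}(G^{\mathbb{C}})_{w}=\dim_{\mathbb{R}}G_{w}$; combined with $\dim_{\mathbb{C}}G^{\mathbb{C}}=\dim_{\mathbb{R}}G$ this yields $\dim_{\mathbb{C}}(G^{\mathbb{C}}\cdot w)=\dim_{\mathbb{R}}(G\cdot w)$ for every $w\in V$. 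Put $O:=G^{\mathbb{C}}\cdot v$, a smooth complex manifold defined over $\mathbb{R}$, and let $d:=\dim_{\mathbb{C}}O=\dim_{\mathbb{R}}(G\cdot v)$. Complex conjugation restricts to an anti-holomorphic involution of $O$ whose fixed-point set is exactly $O(\mathbb{R})=O\cap V$; hence $O(\mathbb{R})$ is a smooth real submanifold of $V$ all of whose components have dimension $d$. Now $O(\mathbb{R})$ is a disjoint union of $G$-orbits, each of which has dimension $d$ by the computation above (as $G^{\mathbb{C}}\cdot w=O$ for $w\in O(\mathbb{R})$) and is thus an embedded $d$-dimensional submanifold of the $d$-dimensional manifold $O(\mathbb{R})$, hence relatively open in it — and therefore also relatively closed in it. In particular $G\cdot v$ is open and closed in $O(\mathbb{R})$.

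To finish, I would take $w\in\partial(G\cdot v)$ and argue that $w\notin O$: were $w\in O$, then $w\in O(\mathbb{R})$ (as $w$ is real) and $w\in\overline{G\cdot v}$, so $w$ would lie in the closure of $G\cdot v$ computed inside $O(\mathbb{R})$, which equals $G\cdot v$ by the previous paragraph — contradicting $w\notin G\cdot v$. Hence $w\in\overline{O}^{\mathrm{Zar}}\setminus O$, using that the Euclidean and Zariski closures of the constructible set $O$ coincide. By Borel's closed orbit lemma applied to the complex action, $\overline{O}^{\mathrm{Zar}}\setminus O$ is a union of $G^{\mathbb{C}}$-orbits each of complex dimension $<\dim_{\mathbb{C}}O=d$. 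Therefore $\dim_{\mathbb{R}}(G\cdot w)=\dim_{\mathbb{C}}(G^{\mathbb{C}}\cdot w)<d=\dim_{\mathbb{R}}(G\cdot v)$, which is exactly what we wanted.

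The step I expect to be the main obstacle is the assertion that $G\cdot v$ is open and closed inside $O(\mathbb{R})$ — equivalently, that the set of real points of the complex orbit through $v$ splits into real $G$-orbits all of the same dimension, each relatively open. The equidimensionality is the stabilizer–complexification computation; the relative openness is the delicate part, and it relies on $O(\mathbb{R})$ being a genuine smooth manifold of dimension $d$ (as the fixed locus of an anti-holomorphic involution on the complex manifold $O$) rather than merely a semialgebraic set of dimension at most $d$. An alternative, slightly heavier route is via Galois cohomology: the finiteness of $H^{1}(\mathrm{Gal}(\mathbb{C}/\mathbb{R}),G_{v}(\mathbb{C}))$ shows that $O(\mathbb{R})$ is a \emph{finite} union of $G$-orbits, after which the openness is immediate from equidimensionality. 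Everything else — the $G$-invariance of the closure, the dimension bookkeeping, and the coincidence of the Zariski and Euclidean closures of the complex orbit — is routine.
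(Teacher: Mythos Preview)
The paper does not actually give a proof of this proposition: it is quoted verbatim from \cite[Proposition 3.2-(b)]{Jablonski1}, and the paper only frames it as ``the well-known closed orbit lemma of Armand Borel \ldots\ and the respective translation to the real setting due to Michael Jablonski''. So there is no in-paper proof to compare against; the paper simply invokes the cited result.

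That said, your argument is the natural one and is precisely the ``translation to the real setting'' the paper alludes to: complexify the action, use that the stabilizer of a real point complexifies to the complex stabilizer so that $\dim_{\mathbb{R}}(G\cdot w)=\dim_{\mathbb{C}}(G^{\mathbb{C}}\cdot w)$, show that the real points of the complex orbit $O=G^{\mathbb{C}}\cdot v$ form a smooth real manifold of dimension $d$ in which $G\cdot v$ is open and closed, and then appeal to Borel's closed orbit lemma over $\mathbb{C}$ to conclude that any $w\in\partial(G\cdot v)$ lies outside $O$ and hence in a strictly smaller complex orbit. The only point deserving care is exactly the one you flag: that each real $G$-orbit inside $O(\mathbb{R})$ is relatively open. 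Your justification (equidimensionality plus the fact that the fixed locus of the anti-holomorphic involution on the smooth complex manifold $O$ is a genuine $d$-dimensional real manifold) is sound; the Galois-cohomological alternative you mention is essentially what Jablonski uses to control the real orbits inside a complex orbit. In short, your proof is correct and matches the intended strategy behind the citation.
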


Recall that if $G$ is a Lie group which acts smoothly on a smooth manifold $M$, then each orbit is an immersed submanifold of $M$. Moreover,
If $G$ has at most countably many connected components and $G\cdot p$ is an orbit, then $G\cdot p$ is diffeomorphic to the homogeneous space $G / G_{p}$ (see \cite[Corollary 10.1.16.]{HilgertNeeb}). The group $G_{p}$ considered above is the \textit{isotropy group} or \textit{stabilizer} of $p$, and so, the dimension of $G\cdot p$ is equal to $\operatorname{dim}(\mathfrak{g}) - \operatorname{dim}(\mathfrak{g}_{p})$, where $\mathfrak{g}_{p}$ is the Lie algebra of $G_{p}$.

In our present context, if $G = \operatorname{Sp}(2n, \mathbb{R})$ and $\mu \in \mathcal{L}_{\omega_{\tiny{\mbox{cn}}}}(\mathbb{R}^{2n})$, then $G_{\mu}$ is the group of symplectic automorphism of the symplectic Lie algebra $(\mathbb{R}^{2n} , \mu , {\omega_{\tiny{\mbox{cn}}}})$. Regarding $\operatorname{Lie}(G_{\mu})$, let us introduce the following definition:

\begin{definition}
  Let $(\mathfrak{g}, \omega)$ be a symplectic Lie algebra. A \textit{symplectic derivation} of  $(\mathfrak{g}, \omega)$ is a linear transformation which is a derivation of the Lie algebra $\mathfrak{g}$ and a skew-adjoint linear operator for $\omega$; i.e. a linear map $D: \mathfrak{g} \rightarrow \mathfrak{g}$ such that:
  \begin{itemize}
    \item (Derivation) $D [x,y] = [Dx,y] + [x,Dy]$,
    \item (skew-adjoint for $\omega$) $\omega(Dx ,y) + \omega(x,Dy) = 0$
  \end{itemize}
  for all $x,y \in \mathfrak{g}$.

We denote the real vector space of all symplectic derivations of $(\mathfrak{g}, \omega)$ by $\operatorname{Der}_{\omega}(\mathfrak{g})$.
\end{definition}


Using the last definition and the above-mentioned facts about orbits, an immediate consequence of the Proposition \ref{orden} is that the dimension of the algebra of symplectic derivations of a symplectic Lie algebra is an \textit{obstruction} to study its degenerations:

\begin{corollary}\label{dersymp}
If a symplectic Lie algebra $(\mathbb{R}^{2n}, \mu, {\omega_{\tiny{\mbox{cn}}}} )$ degenerates to $(\mathbb{R}^{2n}, \lambda, {\omega_{\tiny{\mbox{cn}}}} )$ with respect to $\operatorname{Sp}(2n, \mathbb{R})$, then
$$
\operatorname{Dim}(\operatorname{Der}_{\omega_{\tiny{\mbox{cn}}}}(\mathbb{R}^{2n}, \mu ) )< \operatorname{Dim}(\operatorname{Der}_{\omega_{\tiny{\mbox{cn}}}}(\mathbb{R}^{2n}, \lambda ) ).
$$
\end{corollary}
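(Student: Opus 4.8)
The plan is to deduce Corollary~\ref{dersymp} directly from Proposition~\ref{orden} once we identify the objects involved. First I would observe that $\operatorname{Sp}(2n,\mathbb{R})$ is a real reductive algebraic group acting linearly (hence rationally) on the finite-dimensional vector space $C^{2}_{\omega_{\operatorname{cn}}}(\mathbb{R}^{2n};\mathbb{R}^{2n})$ by change of basis, and that $\mu,\lambda$ both lie in the invariant subset $\mathcal{L}_{\omega_{\operatorname{cn}}}(\mathbb{R}^{2n})$. A degeneration means $\lambda\in\overline{\operatorname{Sp}(2n,\mathbb{R})\cdot\mu}$; since the statement to be proved has a \emph{strict} inequality between dimensions of derivation algebras, I must also use that $\lambda\notin\operatorname{Sp}(2n,\mathbb{R})\cdot\mu$ — this is automatic because if $\lambda$ were in the orbit of $\mu$ then, being symplectically isomorphic to $\mu$, it would have a derivation algebra of the same dimension, contradicting the very inequality we are proving; so in fact the corollary should be read as: whenever the degeneration is proper, the strict inequality holds, and conversely when $\operatorname{Dim}(\operatorname{Der}_{\omega_{\operatorname{cn}}}(\mathbb{R}^{2n},\mu))\ge\operatorname{Dim}(\operatorname{Der}_{\omega_{\operatorname{cn}}}(\mathbb{R}^{2n},\lambda))$ there can be no proper degeneration. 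I would phrase the proof around the proper case and note the trivial case $\lambda\in\operatorname{Sp}(2n,\mathbb{R})\cdot\mu$ separately (there the two dimensions are equal and nothing is asserted, or the statement is vacuous).

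The second step is the translation of dimensions. By the facts recalled after Proposition~\ref{orden}, for $G=\operatorname{Sp}(2n,\mathbb{R})$ and any $\nu\in\mathcal{L}_{\omega_{\operatorname{cn}}}(\mathbb{R}^{2n})$ the orbit $G\cdot\nu$ is diffeomorphic to $G/G_{\nu}$, so $\dim(G\cdot\nu)=\dim\mathfrak{sp}(2n,\mathbb{R})-\dim\operatorname{Lie}(G_{\nu})$. Here $G_{\nu}$ is exactly the group of symplectic automorphisms of $(\mathbb{R}^{2n},\nu,\omega_{\operatorname{cn}})$, and I would check that its Lie algebra is precisely $\operatorname{Der}_{\omega_{\operatorname{cn}}}(\mathbb{R}^{2n},\nu)$: differentiating a curve $g(t)$ of symplectic automorphisms with $g(0)=\operatorname{id}$ gives, from $g(t)\cdot\nu=\nu$, the derivation condition $D[x,y]_{\nu}=[Dx,y]_{\nu}+[x,Dy]_{\nu}$, and from $g(t)^{\ast}\omega_{\operatorname{cn}}=\omega_{\operatorname{cn}}$ (equivalently $g(t)\in\operatorname{Sp}(2n,\mathbb{R})$) the skew-adjointness $\omega_{\operatorname{cn}}(Dx,y)+\omega_{\operatorname{cn}}(x,Dy)=0$; conversely any such $D$ exponentiates to a one-parameter group of symplectic automorphisms. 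Thus $\dim(G\cdot\nu)=\dim\mathfrak{sp}(2n,\mathbb{R})-\operatorname{Dim}(\operatorname{Der}_{\omega_{\operatorname{cn}}}(\mathbb{R}^{2n},\nu))$.

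Finally I would combine the pieces. If $(\mathbb{R}^{2n},\mu,\omega_{\operatorname{cn}})$ degenerates properly to $(\mathbb{R}^{2n},\lambda,\omega_{\operatorname{cn}})$, then $\lambda\in\overline{G\cdot\mu}\setminus G\cdot\mu=\partial(G\cdot\mu)$, so by Proposition~\ref{orden} the orbit $G\cdot\lambda$ has strictly smaller dimension than $G\cdot\mu$. Substituting the dimension formula from the previous step, $\dim\mathfrak{sp}(2n,\mathbb{R})-\operatorname{Dim}(\operatorname{Der}_{\omega_{\operatorname{cn}}}(\mathbb{R}^{2n},\lambda))<\dim\mathfrak{sp}(2n,\mathbb{R})-\operatorname{Dim}(\operatorname{Der}_{\omega_{\operatorname{cn}}}(\mathbb{R}^{2n},\mu))$, which cancels to exactly the claimed strict inequality $\operatorname{Dim}(\operatorname{Der}_{\omega_{\operatorname{cn}}}(\mathbb{R}^{2n},\mu))<\operatorname{Dim}(\operatorname{Der}_{\omega_{\operatorname{cn}}}(\mathbb{R}^{2n},\lambda))$.

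The main obstacle, such as it is, is the careful identification $\operatorname{Lie}(G_{\mu})=\operatorname{Der}_{\omega_{\operatorname{cn}}}(\mathbb{R}^{2n},\mu)$ and making sure the reductivity and rationality hypotheses of Proposition~\ref{orden} are genuinely met for the $\operatorname{Sp}(2n,\mathbb{R})$-action on $C^{2}_{\omega_{\operatorname{cn}}}(\mathbb{R}^{2n};\mathbb{R}^{2n})$; everything else is a short chain of standard facts about Lie group actions already quoted in the preliminaries. One should also remember that $\mathcal{L}_{\omega_{\operatorname{cn}}}(\mathbb{R}^{2n})$ is only a real algebraic subset of the ambient vector space, but that is harmless: Proposition~\ref{orden} is applied to the linear action on the whole vector space, and $\mu,\lambda$ just happen to lie in an invariant subvariety, so their orbits and orbit closures are the same whether computed in $\mathcal{L}_{\omega_{\operatorname{cn}}}(\mathbb{R}^{2n})$ or in $C^{2}_{\omega_{\operatorname{cn}}}(\mathbb{R}^{2n};\mathbb{R}^{2n})$.
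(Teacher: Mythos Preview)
Your proof is correct and follows exactly the approach the paper intends: the corollary is stated as an immediate consequence of Proposition~\ref{orden} together with the preceding identification of $\operatorname{Lie}(G_\mu)$ with $\operatorname{Der}_{\omega_{\operatorname{cn}}}(\mathbb{R}^{2n},\mu)$ and the dimension formula $\dim(G\cdot\mu)=\dim\mathfrak{sp}(2n,\mathbb{R})-\dim\operatorname{Der}_{\omega_{\operatorname{cn}}}(\mathbb{R}^{2n},\mu)$. Your observation that the strict inequality forces the degeneration to be proper is also well taken; the paper tacitly assumes this and uses the corollary only as an obstruction to proper degenerations.
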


Another important consequence of the Proposition \ref{orden} is that the notion of degeneration of symplectic Lie algebra determines a \textit{partial order} on the orbit space $\mathcal{L}_{\omega_{\tiny{\mbox{cn}}}}(\mathbb{R}^{2n}) / \operatorname{Sp}(2n, \mathbb{R})$ defined by $\operatorname{Sp}(2n, \mathbb{R}) \cdot \mu \leq \operatorname{Sp}(2n, \mathbb{R})\cdot \lambda$ if $(\mathbb{R}^{2n}, \mu, {\omega_{\tiny{\mbox{cn}}}} )$ degenerates to $(\mathbb{R}^{2n}, \lambda, {\omega_{\tiny{\mbox{cn}}}} )$ with respect to $\operatorname{Sp}(2n, \mathbb{R})$.

The following proposition is another useful tool for the study orbit closures.  For all we know, such result is due to Fritz Grunewald and Joyce O'Halloran.

\begin{proposition}[{\cite[Proposition 1.7.]{GrunewaldOHalloran}}]\label{Borel}
Let $\operatorname{G} \subseteq \operatorname{GL}(n,\mathbb{C})$ be a reductive complex algebraic group acting rationally on an algebraic set $\mathcal{Z}$
and let $\operatorname{K} \operatorname{B}$ be an \textit{Iwasawa decomposition} of $\operatorname{G}$. Then for all $p \in \mathcal{Z}$, $\overline{\operatorname{G}\cdot p} = \operatorname{K} \cdot \overline{(\operatorname{B}\cdot p)}$.
\end{proposition}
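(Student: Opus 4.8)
\noindent The plan is to establish the two inclusions $\overline{\operatorname{G}\cdot p}\subseteq\operatorname{K}\cdot\overline{\operatorname{B}\cdot p}$ and $\operatorname{K}\cdot\overline{\operatorname{B}\cdot p}\subseteq\overline{\operatorname{G}\cdot p}$ separately. The only genuine ingredients will be that $\operatorname{K}$ is compact and that $\operatorname{G}=\operatorname{K}\operatorname{B}$ as a set (part of the Iwasawa decomposition $\operatorname{G}=\operatorname{K}\operatorname{A}\operatorname{N}$ with $\operatorname{B}=\operatorname{A}\operatorname{N}$); the solvability of $\operatorname{B}$ is not needed for the equality itself, it is merely what makes $\overline{\operatorname{B}\cdot p}$ tractable in the applications. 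Throughout I work with the Hausdorff (Euclidean) topology on $\mathcal{Z}$, the one relevant for degenerations, and I record two elementary remarks used repeatedly: for each fixed $g\in\operatorname{G}$ the map $z\mapsto g\cdot z$ is a homeomorphism of $\mathcal{Z}$ (inverse $z\mapsto g^{-1}\cdot z$), so it carries closures to closures; consequently $\overline{\operatorname{G}\cdot p}$ is $\operatorname{G}$-invariant, in particular $\operatorname{K}$-invariant.

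\medskip\noindent\emph{Step 1.} First I would prove the general fact that $\operatorname{K}\cdot C$ is closed in $\mathcal{Z}$ whenever $C\subseteq\mathcal{Z}$ is closed. Consider the continuous map $F\colon\operatorname{K}\times\mathcal{Z}\to\mathcal{Z}$, $F(k,y)=k^{-1}\cdot y$; then $F^{-1}(C)$ is closed in $\operatorname{K}\times\mathcal{Z}$ and $\operatorname{pr}_{\mathcal{Z}}\!\left(F^{-1}(C)\right)=\{\,y:\exists\,k\in\operatorname{K},\ k^{-1}\cdot y\in C\,\}=\operatorname{K}\cdot C$. Since $\operatorname{K}$ is compact, the projection $\operatorname{pr}_{\mathcal{Z}}\colon\operatorname{K}\times\mathcal{Z}\to\mathcal{Z}$ is a closed map (tube lemma), so $\operatorname{K}\cdot C$ is closed. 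Applying this with $C=\overline{\operatorname{B}\cdot p}$ shows that $\operatorname{K}\cdot\overline{\operatorname{B}\cdot p}$ is closed; as it contains $\operatorname{K}\operatorname{B}\cdot p=\operatorname{G}\cdot p$, it contains $\overline{\operatorname{G}\cdot p}$.

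\medskip\noindent\emph{Step 2.} Conversely, $\operatorname{B}\cdot p\subseteq\operatorname{G}\cdot p\subseteq\overline{\operatorname{G}\cdot p}$ and the last set is closed, so $\overline{\operatorname{B}\cdot p}\subseteq\overline{\operatorname{G}\cdot p}$; applying $\operatorname{K}$ and using the $\operatorname{K}$-invariance of $\overline{\operatorname{G}\cdot p}$ noted above gives $\operatorname{K}\cdot\overline{\operatorname{B}\cdot p}\subseteq\operatorname{K}\cdot\overline{\operatorname{G}\cdot p}=\overline{\operatorname{G}\cdot p}$. Combining the two steps yields $\overline{\operatorname{G}\cdot p}=\operatorname{K}\cdot\overline{\operatorname{B}\cdot p}$.

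\medskip\noindent I do not expect any serious obstacle: the only point that is not pure formalism is the closedness of $\operatorname{K}\cdot C$ for compact $\operatorname{K}$, i.e.\ the tube-lemma argument of Step 1, and everything else is bookkeeping with closures and continuity of the action (rationality of the action is not even needed for this statement). The real difficulty in exploiting this proposition lies downstream, in the explicit determination of $\overline{\operatorname{B}\cdot p}$ for the solvable group $\operatorname{B}$, where one argues with one-parameter subgroups and concrete limits.
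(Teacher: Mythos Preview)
Your argument is correct and is essentially the standard one: the inclusion $\operatorname{K}\cdot\overline{\operatorname{B}\cdot p}\subseteq\overline{\operatorname{G}\cdot p}$ is immediate from $\operatorname{G}$-invariance of $\overline{\operatorname{G}\cdot p}$, and the reverse inclusion follows because compactness of $\operatorname{K}$ makes $\operatorname{K}\cdot C$ closed for any closed $C$ (your tube-lemma step), together with $\operatorname{G}=\operatorname{K}\operatorname{B}$.

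Note, however, that the paper does \emph{not} supply its own proof of this proposition: it is quoted verbatim from Grunewald and O'Halloran \cite[Proposition~1.7]{GrunewaldOHalloran}, and the authors only add the remark that ``the same proof of the preceding proposition works as well in the case of continuous representations of real reductive Lie groups.'' So there is no in-paper argument to compare against; your write-up is precisely the kind of short proof the authors are alluding to, and your observation that the only non-formal ingredient is the compactness of $\operatorname{K}$ (with rationality of the action playing no role in the equality itself) is exactly right.
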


The same proof of the preceding proposition works as well in the case of continuous representations of real reductive Lie groups.

An Iwasawa decomposition of the real symplectic group $\operatorname{Sp}(2n, \mathbb{R})$ is given by $\operatorname{K} \operatorname{B}$ with
$$
\operatorname{K} =\left\{ \left(\begin{array}{cc} A & -B \\ B & A \end{array} \right) : A + i B \in \operatorname{U}(n)\right\},
$$
$$
\operatorname{A} =\left\{ \operatorname{diag}\left(t_1,\ldots,t_n, \frac{1}{t_1} , \ldots, \frac{1}{t_n}\right) : t_i \in \mathbb{R}_{+} \right\},
$$
$$
\operatorname{N} =\left\{  \left(\begin{array}{cc} (A^{-1})^{\operatorname{T}} & 0 \\ A S & A \end{array} \right) :
\begin{array}{l} A \mbox{ is a unit lower triangular matrix and } \\ S \mbox{ is a symmetric matrix } \end{array}
\right\}
$$
and $\operatorname{B} = \operatorname{A}\operatorname{N}$. Thus, for example, for the case of $\operatorname{Sp}(4, \mathbb{R})$, we have $\operatorname{N}$ is the set
$$
\left\{ \left(\begin{array}{cc} (A^{-1})^{\operatorname{T}} & 0 \\ B & A \end{array} \right) :
 A=\left(\begin{array}{cc} 1 & 0 \\ a & 1 \end{array} \right),\,
 B =  \left(\begin{array}{cc} x & y \\ ax+y & ay+z\end{array} \right), \, a,x,y,z \in \mathbb{R}
 \right\}
$$
As a consequence of having a good description of the group $\operatorname{N}$, we can easily give a parametrization of a $B$-orbit; which could be practical for deciding whether a symplectic Lie algebra degenerates to another one or not. We will illustrate how to implement such parametrizations in the following section.

\subsubsection{Symplectic invariants}\label{invariants}

Since we have a non-degenerate bilinear form $\omega$, we have a natural isomorphisms from the space $L^{k+1}(\mathbb{R}^{2n};\mathbb{R})$ to  $L^{k}(\mathbb{R}^{2n};\mathbb{R}^{2n})$, with $k \in \mathbb{N} \cup \{0\}$, (recall that by convention, $L^{0}(\mathbb{R}^{2n};\mathbb{R}^{2n}) = \mathbb{R}^{2n}$). By analogy with the Riemannian case, for any given $i \in \mathbb{N}$, $1 \leq i \leq k+1$, we can implicitly define a \textit{musical isomorphism} $\sharp_i : L^{k+1}(\mathbb{R}^{2n};\mathbb{R}) \rightarrow L^{k}(\mathbb{R}^{2n};\mathbb{R}^{2n})$ as follows: if $\theta \in L^{k+1}(\mathbb{R}^{2n};\mathbb{R})$, let $\theta^{\sharp_i}$ be the unique multilinear map in $L^{k}(\mathbb{R}^{2n};\mathbb{R}^{2n})$ which satisfies
$$
\theta( v_1,\ldots,v_i,\ldots,v_{k+1} ) = \omega( \theta^{\sharp_i} ( v_1,\ldots,\widehat{v_{i}},\ldots,v_{k+1} ) , v_i ),
$$
where the hat indicates that $v_{i}$ is omitted.

The inverse isomorphism of the \textit{sharp map} is the \textit{flat map} $\flat_i : L^{k}(\mathbb{R}^{2n};\mathbb{R}^{2n}) \rightarrow L^{k+1}(\mathbb{R}^{2n};\mathbb{R})$ given by
$$
\mu^{ \flat_i }( v_1, \ldots, v_i, \ldots, v_{k+1} ) = \omega( \mu(v_1,\ldots,\widehat{v_{i}},\ldots,v_{k+1}) , v_i).
$$

Also, recall that any (Lie) algebra $(\mathbb{R}^{m},\mu)$ allow us to define new multilinear maps and forms which are invariant under isomorphisms. For instance, any \textit{multilinear Lie polynomial} $P_{\mu}$ in $k$ variables $X_1, X_2, \ldots ,X_k$ (with $k\geq 2$) is an element of $L^{k}(\mathbb{R}^{m};\mathbb{R}^{m})$ and it is well-known that $P_{\mu}$  can be written in the form
$$
P_{\mu}(X_1, X_2, \ldots , X_k) = \sum_{ \substack{\sigma \in \operatorname{S}_{k}\\ \sigma(1) = 1 }} a_{\sigma} \: \mu(\ldots \mu(\mu(X_{\sigma(1)},X_{\sigma(2)} ), X_{\sigma(3)}) \ldots X_{\sigma(k)})
$$
(see for instance \cite[\S 5.6.2.]{Reutenauer} or \cite[\S 4.8.1.]{Bahturin}). Furthermore, we can consider the \textit{$i\mbox{-th}$ \textit{trace} of $P_{\mu}$} (see \cite[Appendix B]{Lee2}). This is just the map $\operatorname{tr}_{i}: L^{k+1}(\mathbb{R}^{m};\mathbb{R}^{m}) \rightarrow L^{k}(\mathbb{R}^{m}; \mathbb{R})$ defined by letting  $(\operatorname{tr}_{i} \mu)(v_1, \ldots, v_{k})$ be the trace of the linear map
$$\mu(v_1, \ldots, \underbrace{\square}_{\mbox{\tiny $i$\mbox{-th entry} }}, \ldots , v_{k}) : \mathbb{R}^{m} \rightarrow \mathbb{R}^{m}.$$

Finally, we can also consider tensor of product multilinear forms which are invariant under isomorphisms to obtain new \textit{invariants}.

For a specific example, let $P$ be the non-associative and non-commutative polynomial $P(X,Y,Z) = (X \cdot (Y \cdot Z))$ and consider the map $\mathcal{P}: L^{2}(\mathbb{R}^{m};\mathbb{R}^{m}) \rightarrow  L^{3}(\mathbb{R}^{m};\mathbb{R}^{m})$ given by $\mathcal{P}(\mu)(v_1,v_2,v_3) = \mu(v_1, \mu(v_2,v_3))$. Clearly, $\mathcal{P}$ is a $\operatorname{GL}(m,\mathbb{R})$-equivariant continous map (i.e., $\mathcal{P}(g\cdot \mu) = g\cdot \mathcal{P}(\mu)$) and it allows us to naturally construct the maps
$\operatorname{tr}_1 \circ \mathcal{P}, \operatorname{tr}_3 \circ \mathcal{P}: L^{2}(\mathbb{R}^{m} ;\mathbb{R}^{m}) \rightarrow L^{2}(\mathbb{R}^{m};\mathbb{R})$, which are also $\operatorname{GL}(m,\mathbb{R})$-equivariant continous maps. When $\mathfrak{g}:=(\mathbb{R}^{m},\mu)$ is a Lie algebra, it is to be observed that $(\operatorname{tr}_1 \circ \mathcal{P}) \mu$ is identically zero and $(\operatorname{tr}_3 \circ \mathcal{P})\mu$ is the \textit{Cartan-Killing form} of $\mathfrak{g}$, and $(\operatorname{tr}_3 \circ \mathcal{P})\mu +  c \, (\operatorname{tr}_2\mu) \otimes (\operatorname{tr}_2\mu)$, with $c$ a scalar, is the \textit{modified Cartan-Killing form} (as it was defined in \cite[\S IV.]{nesterenko})

With this motivation in mind,  it is so easy to produce symplectic invariants of symplectic Lie algebra. For instance, it is quite sufficient for our purposes to study the following function: let $c_1,\ldots,c_6 \in \mathbb{R}$ be arbitrary constants and consider $\varphi_{(c_1,\ldots,c_6)}: C^{2}_{\omega_{\tiny{\mbox{cn}}}}(\mathbb{R}^{2n};\mathbb{R}^{2n}) \rightarrow L^{2}(\mathbb{R}^{2n};\mathbb{R}^{2n})$ the function defined implicitly by the relation $\omega ( (\varphi_{(c_1,\ldots,c_6)}\mu)(v_1,v_2) , v_3 )$ equal to
\begin{eqnarray*}
& &    c_1 \, \omega (\mu(v_1,v_2),v_3)  + c_2 \, \omega (\mu(v_2,v_3),v_1) + c_3 \, \omega (\mu(v_3,v_1),v_2) \\
& &   + c_4\,\omega(v_1,v_2) \, (\operatorname{tr}_2\mu)v_3 +  c_5\,\omega(v_2,v_3) \, (\operatorname{tr}_2\mu)v_1 + c_6\,\omega(v_3,v_1) \, (\operatorname{tr}_2\mu)v_2.
\end{eqnarray*}

Note that, there are some redundancies in the definition of $\varphi_{(c_1,\ldots,c_6)}$ because of the definition of the set $C^{2}_{\omega_{\tiny{\mbox{cn}}}}(\mathbb{R}^{2n};\mathbb{R}^{2n})$; for example, $\varphi_{(c_1,c_2,c_3,c_4,c_5,c_6)}$ is equal to $\varphi_{(c_1-c_3,c_2-c_3,0,c_4,c_5,c_6)}$.

The function $\varphi_{(c_1,\ldots,c_6)}$ is continous and $\operatorname{Sp}(2n, \mathbb{R})$-equivariant, and to each algebra law in $C^{2}_{\omega_{\tiny{\mbox{cn}}}}(\mathbb{R}^{2n};\mathbb{R}^{2n})$ it assigns a second algebra structure on $\mathbb{R}^{2n}$.
From a geometrical point of view, if $\mathfrak{g} = (\mathbb{R}^{2n},\mu)$ is a Lie algebra, then $\varphi_{(c_1,\ldots,c_6)}\mu$ defines a \textit{left invariant affine connection} on any connected Lie group with Lie algebra $\mathfrak{g}$ (see \cite[\S 2.4.]{nomizu}). One of the most remarkable example of such an affine connection is given by the \textit{canonical torsion-free flat connection} $\nabla^{{\omega_{\tiny{\mbox{cn}}}}}$ of a symplectic Lie algebra $(\mathbb{R}^{2n},\mu,{\omega_{\tiny{\mbox{cn}}}} )$; which was introduced by Bon-Yao Chu in \cite[Theorem 6.]{chu} and generalized by Oliver Baues and Vicente Cort\'{e}s in \cite[Proposition 1.3.1.]{bauescortes}. The connection $\nabla^{{\omega_{\tiny{\mbox{cn}}}}}$ is given implicitly by the expression
$$
{\omega_{\tiny{\mbox{cn}}}}(\nabla^{{\omega_{\tiny{\mbox{cn}}}}}_{v_1} v_{2} , v_3) = -{\omega_{\tiny{\mbox{cn}}}}(v_2 , \mu(v_1 , v_3));
$$
i.e., $(\varphi_{(0,0,-1,0,0,0)}\mu)(v_1,v_2) = \nabla^{{\omega_{\tiny{\mbox{cn}}}}}_{v_1} v_{2}$.


\subsubsection{symplectic Lie algebras of dimension four}

In \cite{ovando} Gabriela Ovando gives a complete classification of $4$-dimensional symplectic Lie algebras (up to equivalence).
For each symplectic Lie algebra of Ovando's classification, we have fixed a symplectic basis and written the respective structure equations with respect to such basis. We can rewrite such result as:

\begin{proposition}[{\cite[Proposition 2.4.]{ovando}}]
Any four dimensional symplectic Lie algebra is symplectomorphically equivalent to one and only one symplectic Lie algebra $(\mathbb{R}^{4},\mu_{i},{\omega_{\tiny{\mbox{cn}}}})$ where $\mu_{i}$ is given in Table 1 or 2.

\begin{table}[ht]
\centering
\begin{tabular}{ll}
$(\mathfrak{a}_{4}, \omega)$: & $\mu_{0}:=\, $ The four dimensional abelian Lie algebra\\
$(\mathfrak{rh}_{3},\omega)$: & $\mu_{1}:=\left\{ [e_1, e_2] = e_3 \right.$\\
$(\mathfrak{rr}_{3,0},\omega)$: & $\mu_{2}:=\left\{  [e_1, e_3] = e_3 \right.$ \\
$(\mathfrak{rr}_{3,-1},\omega)$: & $\mu_{3}:=\left\{  [e_1, e_2] = -e_2, [e_1, e_4] = e_4 \right.$\\
$(\mathfrak{rr}'_{3,0},\omega)$: & $\mu_{4}:=\left\{  [e_1, e_2] = e_4, [e_1, e_4] = -e_2 \right.$\\
$(\mathfrak{r}_{2}\mathfrak{r}_{2},\omega_{\lambda})$: & $\mu_{5}(\lambda):=\left\{  [e_1, e_2] = -\lambda e_3, [e_1, e_3] = e_3, [e_2, e_4] = e_4, \:
\lambda \geq 0 \right.$\\
$(\mathfrak{r}'_{2},\omega)$: & $\mu_{6}:=\left\{ [e_1, e_3] = e_3, [e_1, e_4] = e_4, [e_2, e_3] = -e_4, [e_2, e_4] = e_3  \right.$\\
$(\mathfrak{n}_{4},\omega)$: & $\mu_{7}:=\left\{  [e_1, e_2] = e_4, [e_1, e_4] = e_3 \right.$\\
$(\mathfrak{r}_{4,0},\omega_{+})$: & $\mu_{8}:=\left\{  [e_1, e_3] = e_3, [e_1, e_4] = e_2 \right.$\\
$(\mathfrak{r}_{4,0},\omega_{-})$: & $\mu_{9}:=\left\{  [e_1, e_3] = e_3, [e_1, e_4] = -e_2 \right.$\\
$(\mathfrak{r}_{4,-1},\omega)$: & $\mu_{10}:=\left\{  [e_1, e_2] = e_2, [e_1, e_3] = -e_3, [e_1, e_4] = -e_3-e_4 \right.$\\


$(\mathfrak{r}_{4,-1,\beta},\omega)$: & $\mu_{11}(\beta):=\left\{  [e_1, e_2] = -e_2, [e_1, e_3] = \beta e_3, [e_1, e_4] = e_4,
\: -1\leq \beta <1  \right.$\\
$(\mathfrak{r}_{4,\alpha,-\alpha},\omega)$: & $\mu_{12}(\alpha):=\left\{ [e_1, e_2] = -e_2, [e_1, e_3] = -\frac{1}{\alpha}e_3, [e_1, e_4] = e_4,
\: -1<\alpha<0 \right.$\\
$(\mathfrak{r}'_{4,0,\delta},\omega_{+})$: & $\mu_{13}(\delta):=\left\{  [e_1, e_2] = -\delta e_4, [e_1, e_3] = e_3, [e_1, e_4] = \delta e_2,
\: \delta>0 \right.$\\
$(\mathfrak{r}'_{4,0,\delta},\omega_{-})$: & $\mu_{14}(\delta):=\left\{  [e_1, e_2] = \delta e_4, [e_1, e_3] = e_3, [e_1, e_4] = -\delta e_2,
\: \delta>0  \right.$\\
\end{tabular}\newline
{\scriptsize  Table 1: Classification of $4$-dimensional symplectic Lie algebras}
\end{table}

\begin{table}[ht]
\centering
\begin{tabular}{ll}
$(\mathfrak{d}_{4,1},\omega_{1})$: & $\mu_{15}:=\left\{ [e_1, e_2] = e_2, [e_1, e_3] = e_3, [e_2, e_4] = e_3  \right.$\\
$(\mathfrak{d}_{4,1},\omega_{2})$: & $\mu_{16}:=\left\{\begin{array}{l} [e_1, e_2] = e_2, [e_1, e_3] = e_3, [e_1, e_4] = e_3, \\ {[e_2, e_4]} = e_3  \end{array} \right.$\\
$(\mathfrak{d}_{4,2},\omega_{1})$: & $\mu_{17}:=\left\{ \begin{array}{l} [e_1, e_2] = 2e_2, [e_1, e_3] = e_3, [e_1, e_4] = -e_4, \\ {[e_2, e_4]} = e_3  \end{array} \right.$\\
$(\mathfrak{d}_{4,2},\omega_{2})$: & $\mu_{18}:=\left\{  \begin{array}{l} [e_1, e_2] = -e_2, [e_1, e_3] = 2e_3, [e_1, e_4] = e_4,\\ {[e_2, e_3]} = e_4 \end{array} \right.$ \\
$(\mathfrak{d}_{4,2},\omega_{3})$: & $\mu_{19}:=\left\{ \begin{array}{l} [e_1, e_2] = -e_2, [e_1, e_3] = 2e_3, [e_1, e_4] = e_4, \\ {[e_2, e_3]} = -e_4  \end{array} \right.$\\


$(\mathfrak{d}_{4,\lambda},\omega)$: & $\mu_{20}(\lambda):=\left\{ \begin{array}{l} [e_1, e_2] = \lambda e_2, [e_1, e_3] = e_3, [e_1, e_4] = (1-\lambda)e_4, \\ {[e_2, e_4]} = e_3, \: \lambda \geq \tfrac{1}{2}, \: \lambda \neq 1,2 \end{array} \right.$\\
$(\mathfrak{d}'_{4,\delta},\omega_{+})$: & $\mu_{21}(\delta) :=\left\{  \begin{array}{l} [e_1, e_2] = \frac{1}{2}\delta e_2-e_4, [e_1, e_3] = \delta e_3, \\ {[e_1, e_4]} = e_2+ \frac{1}{2}\delta e_4,  {[e_2, e_4]} = \delta e_3, \: \delta>0  \end{array} \right.$\\
$(\mathfrak{d}'_{4,\delta},\omega_{-})$: & $\mu_{22}(\delta) :=\left\{  \begin{array}{l}   [e_1, e_2] = -\frac{1}{2}\delta e_2-e_4, [e_1, e_3] = - \delta e_3, \\ {[e_1, e_4]} = e_2- \frac{1}{2}\delta e_4, [e_2, e_4] = -\delta e_3, \: \delta>0 \end{array} \right.$\\
$(\mathfrak{h}_{4},\omega_{+})$: & $\mu_{23}:=\left\{  \begin{array}{l} [e_1, e_2] = \frac{1}{2}e_2, [e_1, e_3] = e_3, [e_1, e_4] = e_2 + \frac{1}{2} e_4, \\ {[e_2, e_4]} = e_3  \end{array} \right.$\\
$(\mathfrak{h}_{4},\omega_{-})$: & $\mu_{24}:=\left\{  \begin{array}{l} [e_1, e_2] = \frac{1}{2}e_2, [e_1, e_3] = e_3, [e_1, e_4] = -e_2+ \frac{1}{2} e_4, \\ {[e_2, e_4]} = e_3 \end{array} \right.$
\end{tabular}\newline
{\scriptsize Table 2: Continuation: Classification of $4$-dimensional symplectic Lie algebras with Heisenberg nilradical}
\end{table}
\end{proposition}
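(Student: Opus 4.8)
The proposition is Gabriela Ovando's classification \cite[Proposition 2.4.]{ovando} rewritten in Darboux coordinates, so the argument splits into three tasks: (i) recall the isomorphism classification of four--dimensional real Lie algebras and, for each, decide which symplectic structures it admits up to automorphism; (ii) for every representative pair $(\mathfrak{g},\omega)$ so obtained, pass to a symplectic basis so that $\omega$ becomes $\omega_{\operatorname{cn}}$ and read off the structure equations; (iii) verify that the entries of Tables~1 and 2 are pairwise symplectomorphically inequivalent.

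For (i) I would work one Lie algebra at a time. Starting from the classical list of four--dimensional real Lie algebras (as reproduced, e.g., in \cite{nesterenko}), fix $\mathfrak{g}$ with a basis $\{f_1,\dots,f_4\}$ and write a general $2$--form $\omega=\sum_{i<j}a_{ij}\,f_i^{\ast}\wedge f_j^{\ast}$. Closedness $\operatorname{d}\omega=0$ is a homogeneous linear system in the $a_{ij}$ with coefficients the structure constants of $\mathfrak{g}$, cutting out $Z^{2}(\mathfrak{g})$; non--degeneracy is the single open condition $\omega\wedge\omega\neq0$, i.e. one quadratic inequality in the $a_{ij}$. Thus $\mathfrak{g}$ carries a symplectic structure iff this locus is non--empty; the ones for which it is empty --- e.g. the algebras with $H^{2}(\mathfrak{g};\mathbb{R})=0$ such as $\mathfrak{sl}(2,\mathbb{R})\oplus\mathbb{R}$ and $\mathfrak{su}(2)\oplus\mathbb{R}$ --- are discarded. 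For the remaining algebras one quotients this locus by the pullback action of $\operatorname{Aut}(\mathfrak{g})$; since for four--dimensional solvable $\mathfrak{g}$ the group $\operatorname{Aut}(\mathfrak{g})$ is an explicitly describable low--dimensional linear algebraic group, the orbit count is finite and elementary and yields finitely many normal forms, a few carrying parameters --- exactly the families $\mu_{5}(\lambda)$, $\mu_{11}(\beta)$, $\mu_{12}(\alpha)$, $\mu_{13}(\delta)$, $\mu_{14}(\delta)$, $\mu_{20}(\lambda)$, $\mu_{21}(\delta)$, $\mu_{22}(\delta)$. This is the substance of \cite{ovando}, on which I would rely rather than reprove.

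For (ii), given $(\mathfrak{g},\omega)$ as delivered by (i), run the symplectic Gram--Schmidt process: choose $v_1$ with $\omega(v_1,\cdot)\not\equiv0$, then $v_3$ with $\omega(v_1,v_3)=1$, pass to the $\omega$--orthogonal complement of $\operatorname{span}\{v_1,v_3\}$ (a $2$--dimensional symplectic subspace), and repeat to get $v_2,v_4$ with $\omega(v_2,v_4)=1$; then $\omega=v_1^{\ast}\wedge v_3^{\ast}+v_2^{\ast}\wedge v_4^{\ast}=\omega_{\operatorname{cn}}$. Setting $e_i:=v_i$ and substituting into $[\,\cdot\,,\,\cdot\,]$ gives the bracket in the new basis, and the remaining freedom in the $v_i$ is spent to normalize the structure constants to the displayed shapes. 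Carrying this out for each of the roughly twenty--five pairs produces Tables~1 and 2. This step is routine but voluminous; the real burden is careful bookkeeping, not any conceptual difficulty.

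For (iii), rows with non--isomorphic underlying Lie algebras are trivially inequivalent, so only the repeated types $\mathfrak{r}_{4,0}$, $\mathfrak{r}'_{4,0,\delta}$, $\mathfrak{d}_{4,1}$, $\mathfrak{d}_{4,2}$, $\mathfrak{d}'_{4,\delta}$ and $\mathfrak{h}_{4}$ need attention. For each, the distinct rows are distinct $\operatorname{Aut}(\mathfrak{g})$--orbits in the moduli space $\bigl(Z^{2}(\mathfrak{g})\cap\Omega^{2}(\mathfrak{g})\bigr)/\operatorname{Aut}(\mathfrak{g})$, which is once more what \cite{ovando} establishes; alternatively one can separate them --- and confirm that distinct parameter values within a family are genuinely inequivalent, so that $\lambda\geq0$, $-1\leq\beta<1$, $-1<\alpha<0$, $\delta>0$ and $\lambda\geq\tfrac12$ with $\lambda\neq1,2$ are the non--redundant ranges --- by evaluating the $\operatorname{Sp}(4,\mathbb{R})$--equivariant maps $\varphi_{(c_1,\dots,c_6)}$ of Section~\ref{invariants} on well--chosen tuples and comparing the isomorphism types (or the $\operatorname{Der}_{\omega_{\operatorname{cn}}}$--dimensions, cf. Corollary~\ref{dersymp}) of the resulting second products. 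The delicate point --- and the main obstacle in a self--contained proof --- is precisely fixing these parameter ranges: automorphisms can identify $(\mathfrak{g},\omega_{\lambda})$ with $(\mathfrak{g},\omega_{\lambda'})$ for suitable $\lambda\neq\lambda'$, and disentangling this is the technical heart of \cite{ovando}, which I would therefore quote rather than redo.
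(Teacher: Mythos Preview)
Your proposal is correct and matches the paper's own treatment: the proposition is not proved here but quoted from \cite{ovando}, with the only added content being the passage to a symplectic basis so that $\omega=\omega_{\operatorname{cn}}$---exactly your step (ii). Your outline of (i) and (iii) is a faithful sketch of what Ovando does, and your decision to cite rather than redo that part is precisely what the paper does as well.
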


\newpage

\section{Degenerations of $4$-dimensional symplectic Lie algebras}

In this section, we explain how to determine the \textit{Hasse diagram} of degenerations for $4$-dimensional symplectic Lie algebras.
We begin by using Corollary \ref{dersymp} and Proposition \ref{orden} to order the $4$-dimension symplectic Lie algebras by the dimensions of their algebra of derivations and algebra of symplectic derivations (see Table 3). In fact, if $(\mathbb{R}^{4},\mu,{\omega_{\tiny{\mbox{cn}}}})$ degenerates properly to $(\mathbb{R}^{4}, \lambda, {\omega_{\tiny{\mbox{cn}}}} )$ with respect to $\operatorname{Sp}(2n, \mathbb{R})$, then the Lie algebra $(\mathbb{R}^{4},\mu)$ degenerates to $(\mathbb{R}^{4}, \lambda)$ in the usual sense, and so  $\operatorname{Dim}(\operatorname{Der}_{\omega_{\tiny{\mbox{cn}}}}(\mathbb{R}^{2n}, \mu ) )< \operatorname{Dim}(\operatorname{Der}_{\omega_{\tiny{\mbox{cn}}}}(\mathbb{R}^{2n}, \lambda ) )$ and  $\operatorname{Dim}(\operatorname{Der}(\mathbb{R}^{2n}, \mu ) ) \leq \operatorname{Dim}(\operatorname{Der}(\mathbb{R}^{2n}, \lambda ) )$.

\begin{table}[ht]
\centering
\begin{center}
    \begin{tabular}{ c : c: p{3cm} : p{3cm} : p{3cm}}
     $\operatorname{Der}_{\omega_{\tiny{\mbox{cn}}}}  $ & $ \operatorname{Der}$ & \multicolumn{3}{c}{Symplectic Lie Algebra}\\
     \hdashline
     \multirow{1}{*}{$1$} &  5   &  & \multicolumn{1}{c:}{ $(\mathfrak{d}_{4,2},\omega_{2})$, $(\mathfrak{d}_{4,2},\omega_{3})$}  & \\
     \hdashline
     \multirow{5}{*}{$2$} &  4   &   & &\multicolumn{1}{c}{$(\mathfrak{r}_{2}\mathfrak{r}_{2},\omega_{\lambda})$, $(\mathfrak{r}'_{2},\omega)$}   \\
     \cdashline{2-5}
                          &  5   &   \multicolumn{1}{c:}{ $(\mathfrak{d}_{4,\lambda},\omega)$, $(\mathfrak{d}'_{4,\delta},\omega_{\pm})$ }
                                 & \multicolumn{1}{c:}{ $(\mathfrak{d}_{4,2},\omega_{1})$, $(\mathfrak{h}_{4},\omega_{\pm})$,}   &   \multicolumn{1}{c}{ $(\mathfrak{d}_{4,1},\omega_{2})$ }\\
     \cdashline{2-5}
                          &  \multirow{3}{*}{6}
                          & \multicolumn{1}{c:}{ $(\mathfrak{r}_{4,-1,\beta},\omega)$,} &  $(\mathfrak{r}_{4,0},\omega_{\pm})$, $(\mathfrak{r}_{4,-1},\omega)$,
                     & \\
                          &      &  \multicolumn{1}{c:}{  $(\mathfrak{r}_{4,\alpha,-\alpha},\omega)$,} &  $(\mathfrak{rr}_{3,-1},\omega)$, $(\mathfrak{rr}'_{3,0},\omega)$, & \\
                          &       & \multicolumn{1}{c:}{     $(\mathfrak{r}'_{4,0,\delta},\omega_{\pm})$,} &  & \\
                          \hdashline
     \multirow{3}{*}{$3$} &  5  &   & & \multicolumn{1}{c}{ $(\mathfrak{d}_{4,1},\omega_{1})$ } \\
     \cdashline{2-5}
                          &  7  & \multicolumn{3}{c}{$(\mathfrak{n}_{4},\omega)$}   \\
     \cdashline{2-5}
                          &  8  &   & \multicolumn{1}{c:}{$(\mathfrak{r}_{4,-1,-1},\omega)$} &\\
                          \hdashline
     \multirow{2}{*}{$4$} &  7  &   & \multicolumn{1}{c:}{$(\mathfrak{d}_{4,\frac{1}{2}},\omega)$} &\\
     \cdashline{2-5}
                          &  8  & & \multicolumn{2}{c}{ $(\mathfrak{rr}_{3,0},\omega)$ }  \\
                          \hdashline
     \multirow{1}{*}{$5$} &  10  & \multicolumn{3}{c}{$(\mathfrak{rh}_{3},\omega)$}  \\
     \hdashline
     \multirow{1}{*}{$10$} &  16  & \multicolumn{3}{c}{$(\mathfrak{a}_{4},\omega)$}  \\
    \end{tabular}
\end{center}
{\scriptsize Table 3: Dimension of symplectic derivations }
\end{table}

After ordering, we note that to study the degenerations of $(\mathfrak{d}_{4,2},\omega_{2})$ and $(\mathfrak{d}_{4,2},\omega_{3})$
requires considering far more than in the case of other symplectic Lie algebras. Similarly, there are a considerable number of symplectic Lie algebras
that possibly they might degenerate to $(\mathfrak{r}_{4,-1,-1},\omega)$, $(\mathfrak{d}_{4,\frac{1}{2}},\omega)$ or $(\mathfrak{rr}_{3,0},\omega)$.
Only to reduce the number of cases to study, we use the classification of contractions of $4$-dimensional real Lie algebras given in $\cite[\S VIII-B]{nesterenko}$. By such result, we have that the Lie algebra $\mathfrak{d}_{4,2} \cong A_{4,8}^{-\frac{1}{2}}$ only degenerates properly to
$4A_{1} \cong \mathfrak{a}_{4}$, $A_{4,1} \cong \mathfrak{n}_{4}$, $A_{3,1} \oplus A_{1} \cong \mathfrak{rh}_{3}$ and $A_{4,5}^{\frac{1}{2},1,-\frac{1}{2}} \cong \mathfrak{r}_{4,-\frac{1}{2},\frac{1}{2}}$; which allows us to focus on such algebras to determine the degenerations of $(\mathfrak{d}_{4,2},\omega_{2})$ and $(\mathfrak{d}_{4,2},\omega_{3})$.

The same result implies that if a Lie algebra  $\mathfrak{g}$ degenerates properly to:
\begin{itemize}
  \item  $A_{4,5}^{-1,1,1}\cong \mathfrak{r}_{4,-1,-1}$, then  $\mathfrak{g}$ is isomorphic to $A_{4,2}^{-1} \cong \mathfrak{r}_{4,-1}$.
  \item  $A_{2,1}\oplus 2A_{1} \cong \mathfrak{rr}_{3,0}$, then $\mathfrak{g}$ is isomorphic to $A_{4,3} \cong \mathfrak{r}_{4,0}$ or to $2A_{2,1} \cong \mathfrak{r}_{2}\mathfrak{r}_{2}$.
  \item  $A_{4,8}^{1} \cong \mathfrak{d}_{4,\frac{1}{2}}$, then $\mathfrak{g}$ is isomorphic to $A_{4,7} \cong \mathfrak{h}_{4}$.
\end{itemize}

In the remainder of this section we give illustrative examples to show how obtain the classification of orbits closures in the variety of $4$-dimensional symplectic Lie algebras.

\begin{example} Here we describe how to use symplectic invariants defined in the section 2 to prove a non-degeneration result.

\begin{proposition}
The symplectic Lie algebra $(\mathfrak{d}_{4,2},\omega_{2})$ does not degenerate to $(\mathfrak{d}_{4,2},\omega_{1})$ with respect to $\operatorname{Sp}(2n, \mathbb{R})$.
\end{proposition}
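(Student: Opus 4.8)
The plan is to exhibit a continuous, $\operatorname{Sp}(2n,\mathbb R)$-equivariant map from $C^{2}_{\omega_{\tiny{\mbox{cn}}}}(\mathbb R^{4};\mathbb R^{4})$ to a vector space on which $\operatorname{Sp}(4,\mathbb R)$ acts linearly, and then to compute a numerical invariant of the image which is \emph{strictly larger} on $\mu_{18}$ (the law for $(\mathfrak d_{4,2},\omega_{2})$) than on $\mu_{17}$ (the law for $(\mathfrak d_{4,2},\omega_{1})$). Since any such equivariant map $F$ sends $\overline{\operatorname{Sp}(4,\mathbb R)\cdot\mu_{18}}$ into $\overline{\operatorname{Sp}(4,\mathbb R)\cdot F(\mu_{18})}$, and since an upper semicontinuous orbit invariant (such as the rank of a fixed bilinear form, or the number of nonzero eigenvalues, or a similar quantity that does not decrease under passing to orbit closures — here it must \emph{increase} strictly, so I will in fact use a quantity which is \emph{lower} on the degeneration, e.g. the rank of the symmetric form $(\operatorname{tr}_{2}\mu)\otimes(\operatorname{tr}_{2}\mu)$ or of some trace of $\varphi_{(c_1,\dots,c_6)}\mu$) cannot jump up in the closure, a strict inequality in the wrong direction forbids the degeneration. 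Concretely I would take the family $\varphi_{(c_1,\dots,c_6)}$ introduced in \S\ref{invariants}, form a scalar invariant such as the trace of a power of the associated endomorphism (or the determinant/rank of $\operatorname{tr}_{i}\circ\varphi_{(c_1,\dots,c_6)}$ viewed as a bilinear form via $\sharp$), and search over the parameters $(c_1,\dots,c_6)$ for a choice that separates $\mu_{17}$ from $\mu_{18}$.

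The key steps, in order, are: (i) write down the adjoint matrices and the relevant traces $\operatorname{tr}_{2}\mu_{17}$, $\operatorname{tr}_{2}\mu_{18}$ in the fixed symplectic basis; (ii) for a general parameter vector $(c_1,\dots,c_6)$, compute the bilinear form $b_{c}(\mu)(v_1,v_2) := \omega_{\tiny{\mbox{cn}}}((\varphi_{(c_1,\dots,c_6)}\mu)(v_1,v_2),\,\cdot\,)$ contracted appropriately to land in $L^{2}(\mathbb R^{4};\mathbb R)$, or directly some scalar built from $\varphi_{(c_1,\dots,c_6)}\mu$ that is $\operatorname{Sp}(4,\mathbb R)$-invariant (for instance, since $\varphi_{(c_1,\dots,c_6)}\mu$ is a bilinear map, the map $v\mapsto\operatorname{tr}\big((\varphi_{(c_1,\dots,c_6)}\mu)(v,\,\cdot\,)\big)$ is a linear functional, and pairing it with itself via $\omega^{-1}$ gives nothing, so instead I would use the symmetric bilinear form $(v_1,v_2)\mapsto\operatorname{tr}\big((\varphi\mu)(v_1,\,\cdot\,)\circ(\varphi\mu)(v_2,\,\cdot\,)\big)$, whose rank and signature are $\operatorname{Sp}(4,\mathbb R)$-invariants); (iii) evaluate this invariant on $\mu_{17}$ and on $\mu_{18}$ for a well-chosen $(c_1,\dots,c_6)$ and observe that the rank (or signature, or some symmetric-function-of-eigenvalues) is strictly smaller on $\mu_{18}$ than on $\mu_{17}$; (iv) invoke continuity and equivariance of $\varphi_{(c_1,\dots,c_6)}$ together with the upper semicontinuity of rank to conclude that $\mu_{17}\notin\overline{\operatorname{Sp}(4,\mathbb R)\cdot\mu_{18}}$, i.e. $(\mathfrak d_{4,2},\omega_{2})$ does not degenerate to $(\mathfrak d_{4,2},\omega_{1})$.

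The main obstacle is step (iii): one must verify that the ordinary (non-symplectic) invariants do \emph{not} already separate these two algebras — indeed $\mathfrak d_{4,2}$ is the \emph{same} Lie algebra underlying both $\mu_{17}$ and $\mu_{18}$, so $\operatorname{Der}$ has the same dimension ($5$ in Table~3) and all isomorphism-invariants of the bare Lie algebra coincide; the degeneration can only be obstructed by a genuinely \emph{symplectic} invariant. Thus the real work is to find parameters $(c_1,\dots,c_6)$ for which $\varphi_{(c_1,\dots,c_6)}$ produces a second algebra structure whose $\operatorname{Sp}(4,\mathbb R)$-orbit behaves differently on the two symplectic forms $\omega_{1},\omega_{2}$ on $\mathfrak d_{4,2}$; since the two differ by the presence of the $[e_1,e_4]=e_3$ term in $\mu_{16}$-type structure equations (compare the structure equations of $\mu_{17}$ and $\mu_{18}$, which differ by which bracket carries the "extra" generator), one expects the separating invariant to detect exactly this discrepancy, and a modest search over the six parameters should locate it. Once such a $(c_1,\dots,c_6)$ is fixed, the remaining computation is a finite linear-algebra check over $\mathbb R$, and the semicontinuity argument of step (iv) is immediate from Proposition~\ref{orden} and the remark following Proposition~\ref{Borel}.
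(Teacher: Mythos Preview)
Your plan is essentially the paper's own argument. The paper takes the parameters $(c_1,\dots,c_6)=(0,1,0,-1,0,-1)$, forms exactly the symmetric bilinear form you propose, namely $\beta(X,Y)=\operatorname{tr}\big((\varphi_{(c_1,\dots,c_6)}\mu)(X,\,\cdot\,)\circ(\varphi_{(c_1,\dots,c_6)}\mu)(Y,\,\cdot\,)\big)$, and computes that for $\mu_{17}$ this form is nonzero positive semidefinite while for $\mu_{18}$ it is negative semidefinite; since the $\operatorname{GL}(4,\mathbb R)$-orbit closure of a negative semidefinite form consists only of negative semidefinite forms, the degeneration is obstructed. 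One caveat: of the three invariants you list, \emph{rank} does not separate them (both $\beta$'s have rank~$1$), so it is the signature that does the work.
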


\begin{proof}
Suppose on the contrary that $\mu_{17} \in \overline{\operatorname{Sp}(4, \mathbb{R})\cdot \mu_{18}}$. Now, we take $c_1=0$, $c_2=1$, $c_3=0$,
$c_4=-1$, $c_5 = 0$ and $c_6 = -1$, and let $\lambda_{1} = \varphi_{(c_1,\ldots,c_6)} \mu_{17} $ and $\lambda_{2} = \varphi_{(c_1,\ldots,c_6)}\mu_{18}$;
where $\varphi_{(c_1,\ldots,c_6)}$ is the function defined in \S \ref{invariants}.

Then we have $\lambda_{1} \in \overline{\operatorname{Sp}(4, \mathbb{R})\cdot \lambda_{2}}$. In particular $\lambda_{1} \in \overline{\operatorname{GL}(4, \mathbb{R})\cdot \lambda_{2}}$, or in other words, the real algebra $(\mathbb{R}^{4},\lambda_2)$ degenerates to the algebra $(\mathbb{R}^{4},\lambda_{1})$ (in the usual sense of \textit{degenerations of algebras}).

It is straightforward to check that
$$
\lambda_{1} = \left\{
\begin{array}{l}
e_1\ast e_1 = e_1, \, e_1\ast e_2 = -e_2, \, e_1\ast e_3 = e_3, \, e_1\ast e_4 = -e_4, \\
e_2\ast e_1 = 3e_2, \, e_2\ast e_4 = 3e_3
\end{array}
\right.
$$
and
$$
\lambda_2 = \left\{
\begin{array}{l}
 e_2\ast e_1 = e_2, \, e_2\ast e_2 = -e_1, \, e_2\ast e_3 = -e_4, \, e_2\ast e_4 = e_3, \\
 e_4\ast e_1 = 3e_4, \, e_4\ast e_2 = -3e_3.
\end{array}
\right.
$$

Now we consider the $\operatorname{GL}(4, \mathbb{R})$-equivariant function $\mathscr{C} : L^{2}(\mathbb{R}^{4};\mathbb{R}^{4}) \rightarrow L^{2}(\mathbb{R}^{4};\mathbb{R})$ defined by $(\mathscr{C} \: \vartheta)(X,Y) = \operatorname{tr} \vartheta(X, \vartheta(Y , \square))$, and let
$\beta_{1} = \mathscr{C}(\lambda_1)$ and $\beta_{2} = \mathscr{C}(\lambda_2)$. Thus we have the symmetric bilinear form $\beta_{1} \in \overline{\operatorname{GL}(4, \mathbb{R})\cdot \beta_{2}}$ which is a contradiction, since $\beta_{1}$ is a nonzero bilinear form which is positive semidefinite and $\beta_{2}$ is a negative semidefinite bilinear form. This completes the proof.
\end{proof}
\end{example}

\begin{example} The following proposition shows how to take advantage of having a parametrization of a $\operatorname{B}$-orbit to degenerate a symplectic Lie algebra to another.

\begin{proposition}
  The symplectic Lie algebra $(\mathfrak{d}_{4,2},\omega_{2})$ degenerates to $(\mathfrak{r}_{4,-\frac{1}{2},\frac{1}{2}},\omega)$ with respect to $\operatorname{Sp}(2n, \mathbb{R})$.
\end{proposition}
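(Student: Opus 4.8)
The plan is to exhibit an explicit one-parameter curve inside $\operatorname{Sp}(4,\mathbb{R})\cdot\mu_{18}$ whose limit, as the parameter tends to a boundary value, is a Lie bracket symplectomorphic to $\mu_{20}(\tfrac12)$ (the symplectic Lie algebra $(\mathfrak{r}_{4,-\frac12,\frac12},\omega)$, since $\mathfrak{r}_{4,-\frac12,\frac12}\cong\mathfrak{d}_{4,2}$ does not occur here — rather we use the entry $(\mathfrak{r}_{4,-1,\beta},\omega)$ with the appropriate parameter, or more precisely $\mu_{12}(\alpha)$ with $\alpha=-\tfrac12$, which represents $(\mathfrak{r}_{4,\alpha,-\alpha},\omega)$). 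Concretely, I would take the curve $g_s\in\operatorname{B}\subseteq\operatorname{Sp}(4,\mathbb{R})$ drawn from the Iwasawa $\operatorname{AN}$-part described in the preliminaries: a diagonal piece $\operatorname{diag}(t_1,t_2,t_1^{-1},t_2^{-1})$ together with, if needed, a unipotent factor of the prescribed block form $\left(\begin{smallmatrix} (A^{-1})^{\operatorname{T}} & 0 \\ B & A\end{smallmatrix}\right)$. The virtue of working inside $\operatorname{B}$ is Proposition~\ref{Borel}: it suffices to realize the target in $\overline{\operatorname{B}\cdot\mu_{18}}$, and the block description of $\operatorname{N}$ given above makes the orbit map completely explicit.

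The key steps, in order, are: (1) write down $g_s\cdot\mu_{18}$ by applying the change-of-basis formula $g\cdot\mu=g\mu(g^{-1}\cdot,g^{-1}\cdot)$ to each nonzero structure constant of $\mu_{18}$, i.e.\ to $[e_1,e_2]=-e_2$, $[e_1,e_3]=2e_3$, $[e_1,e_4]=e_4$, $[e_2,e_3]=e_4$; (2) choose the dependence of the diagonal entries $t_i(s)$ on $s$ so that every structure constant stays bounded as $s\to\infty$ (or $s\to 0$), with the bracket $[e_2,e_3]=e_4$ being the one that must be driven to zero — this is the bracket that obstructs $\mu_{18}$ from already being a direct product, and killing it is exactly what produces a smaller, decomposable-type algebra of the $\mathfrak{r}_{4,\cdot,\cdot}$ family; (3) compute the pointwise limit $\lambda_0=\lim_{s\to\infty} g_s\cdot\mu_{18}$ and check it lies in $\mathcal{L}_{\omega_{\operatorname{cn}}}(\mathbb{R}^4)$ (automatic, since the set is closed); (4) identify $\lambda_0$ with the listed representative by producing an explicit symplectomorphism — a permutation/rescaling of the symplectic basis preserving $\omega_{\operatorname{cn}}$ — carrying $\lambda_0$ to $(\mathfrak{r}_{4,-\frac12,\frac12},\omega)$ in Ovando's normal form. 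As a sanity check one verifies the derivation-dimension inequality of Corollary~\ref{dersymp} is consistent, using Table~3.

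The main obstacle I anticipate is step (2): balancing the four structure constants simultaneously under a two-parameter torus (plus at most a few unipotent parameters) so that three of them survive with the correct nonzero values while $[e_2,e_3]=e_4$ degenerates. Because $\mu_{18}$ has eigenvalue data $(-1,2,1)$ for $\operatorname{ad}_{e_1}$ on $\langle e_2,e_3,e_4\rangle$ and $e_4$ sits in the "sum" slot ($2+(-1)=1$), a pure diagonal scaling rescales $[e_2,e_3]=e_4$ by $t_2\,t_3\,t_4^{-1}$; one must arrange the symplectic constraint $t_3=t_1^{-1}$, $t_4=t_2^{-1}$ (from membership in $\operatorname{Sp}(4,\mathbb{R})$, after the relabelling that puts $e_1,e_2$ opposite $e_3,e_4$) so that this monomial $\to 0$ while the others remain $O(1)$; if the torus alone cannot separate them, a unipotent conjugation absorbing the $e_4$-component into a redefinition of the basis will be needed, and checking that this stays in $\operatorname{N}$ and that no unwanted new brackets appear is the delicate bookkeeping. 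Once a working $g_s$ is found, the remaining steps are routine verifications.
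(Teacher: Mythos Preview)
Your proposal is correct and follows the paper's approach: use an element of the Iwasawa $\operatorname{B}$ (in fact just the torus $\operatorname{A}$) to shrink the bracket $[e_2,e_3]=e_4$ while fixing the $\operatorname{ad}_{e_1}$-eigenvalues. The anticipated obstacle does not arise---the diagonal symplectic element $g_t=\operatorname{diag}(1,e^{t},1,e^{-t})$ already gives $g_t\cdot\mu_{18}$ with the single $t$-dependent structure constant $[e_2,e_3]=e^{-2t}e_4$, so the limit as $t\to\infty$ is exactly $\mu_{12}(-\tfrac12)$ and neither a unipotent factor nor a post-hoc symplectomorphism is needed.
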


\begin{proof}

By the observation after Proposition \ref{Borel}, we need to find a Lie algebra law $\eta$ such that $\eta \in \overline{\operatorname{B}\cdot \mu_{18}} \cap \operatorname{Sp}(4, \mathbb{R})\cdot \mu_{12}(-\frac{1}{2})$.

It is so easy to write explicitly the elements of the $\operatorname{B}$-orbit of $\mu_{18}$. If $g =\operatorname{diag}\left(t_1,t_2,\frac{1}{t_1} ,\frac{1}{t_2}\right) \in \operatorname{A}$ and $h \in \operatorname{N}$, then $ \xi = (g\cdot h)^{-1} \cdot \mu_{18}$ is of the form:
$$
\begin{array}{l}
\xi({\it e_1},{\it e_2})=-t_{{1}}a{\it e_1}-t_{{1}}{\it e_2}+3\left( ax+y
 \right) t_{{1}}{\it e_3}-
{\frac { 1}{t_{{1}}}} \left( {t_{{1}}}^{2}{a}^{2}x-{t_{{1}}}^{2}ay+x{t_{{2}}}^{2}-2\,{t_{{1}}}^{2}z \right) {\it e_4},\\
{\xi({\it e_1},{\it e_3})}=2\,t_{{1}}{\it e_3}-t_{{1}}a{\it e_4},\:
\xi({\it e_1},{\it e_4})=t_{{1}}{\it e_4},\\
{\xi({\it e_2},{\it e_3})}=-2\,t_{{1}}a{\it e_3}+{\frac { 1}{t_{{1}}}\left( {t_{{2}}}^{2}+{t_{{1}}}^{2}{a}^{2} \right) {\it e_4}},
\: \xi({\it e_2},{\it e_4})=-t_{{1}}a{\it e_4}.
\end{array}
$$
It follows that  $B \cdot \mu$ is a subset of the subspace $W$ of $C^{2}_{\omega_{\tiny{\mbox{cn}}}}(\mathbb{R}^{4};\mathbb{R}^{4})$ given by
$$
W =
\left\{
\begin{array}{l}
\eta({\it e_1},{\it e_2})=b_{{1}}{\it e_1}+b_{{2}}{\it e_2}+b_{{3}}{\it e_3}+b_{{4}}{\it e_4},\\
{\eta({\it e_1},{\it e_3})}=-2\,b_{{2}}{\it e_3}+b_{{1}}{\it e_4}
,\eta({\it e_1},{\it e_4})=-b_{{2}}{\it e_4},\\
{\eta({\it e_2},{\it e_3})}=2\,b_{{1}}{
\it e_3}+b_{{5}}{\it e_4},
\: \eta({\it e_2},{\it e_4})=b_{{1}}{\it e_4}
\end{array} : b_i \in \mathbb{R}
\right\}
$$
It is a simple matter to show that the Lie algebra laws in $W$ that are in the $\operatorname{GL}(4, \mathbb{R})$-orbit of $\mu_{12}(-\frac{1}{2})$ satisfy the condition $b_{{2}}b_{{5}}+{b_{{1}}}^{2} = 0$. With the purpose of getting closer to the $\operatorname{GL}(4, \mathbb{R})$-orbit of $\mu_{12}(-\frac{1}{2})$, we can attempt to restrict our attention to the Lie algebra laws in the $\operatorname{B}$-orbit of $\mu_{18}$ that have the $b_{1}$ component equal to zero, or equivalently, $a=0$:

$$
\begin{array}{l}
\xi({\it e_1},{\it e_2})=-t_{{1}}{\it e_2}+3\,t_{{1}}y{\it e_3}-{\frac { 1}{t_{{1}}}}\left( x{t_{{2}}}^{2}-2\,{t_{{1}}}^{2}z \right) {\it e_4},
{\xi({\it e_1},{\it e_3})}=2\,t_{{1}}{\it e_3},
{\xi({\it e_1},{\it e_4})}=t_{{1}}{\it e_4},\\
{\xi({\it e_2},{\it e_3})}={\frac {{t_{{2}}}^{2}}{t_{{1}}}}{\it e_4}
\end{array}
$$
Note that the $b_{2}$ and $b_{5}$ components of the preceding Lie algebra laws depend only on $t_{1}$ and $t_{2}$, therefore we could take $x=y=z=0$. This motivates to let $g(u) = \operatorname{diag}({1, \exp(u), 1, \exp(-u)})$ with $u \in \mathbb{R}$, and so
$\xi_u = g(u) \cdot \mu_{18}$ is the Lie algebra law:
$$
\xi_u({\it e_1},{\it e_2})=-{\it e_2},
\xi_u({\it e_1},{\it e_3})=2\,{\it e_3},
\xi_u({\it e_1},{\it e_4})={\it e_4},
\xi_u({\it e_2},{\it e_3})={{\rm e}^{-2\,u}}{\it e_4}.
$$
It is clear that $g(u) \in \operatorname{Sp}(4, \mathbb{R})$ and $g(u) \cdot \mu_{18}$  tends to $\mu_{12}(-\frac{1}{2})$ as $u$ tends to $\infty$; which is much stronger than what we wanted to prove.
\end{proof}

\end{example}

\begin{example}  In this final example we presents how to use parametrizations of $\operatorname{B}$-orbits to prove that a symplectic Lie algebra does not degenerate to another.

\begin{proposition}
The symplectic Lie algebras $(\mathfrak{r}_{2}\mathfrak{r}_{2},\omega_{\lambda})$, with $\lambda \geq 0$, and $(\mathfrak{r}'_{2},\omega)$ do not degenerate to $(\mathfrak{n}_{4},\omega)$ with respect to $\operatorname{Sp}(2n, \mathbb{R})$.
\end{proposition}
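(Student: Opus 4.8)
The plan is to combine the Iwasawa-type decomposition of Proposition~\ref{Borel} with an explicit description of a single Borel orbit, together with one elementary observation: passing from $\operatorname{Sp}(4,\mathbb{R})$-orbits to $\operatorname{B}$-orbits ``freezes'' the Lagrangian ideal $\langle e_3,e_4\rangle$, which drastically limits the available degenerations. Since $\operatorname{K}\operatorname{B}$ is an Iwasawa decomposition of $\operatorname{Sp}(4,\mathbb{R})$, Proposition~\ref{Borel} gives $\overline{\operatorname{Sp}(4,\mathbb{R})\cdot\mu}=\operatorname{K}\cdot\overline{\operatorname{B}\cdot\mu}$ for $\mu\in\{\mu_5(\lambda),\mu_6\}$. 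As $\operatorname{K}\subseteq\operatorname{Sp}(4,\mathbb{R})\subseteq\operatorname{GL}(4,\mathbb{R})$, every bracket in $\operatorname{K}\cdot\overline{\operatorname{B}\cdot\mu}$ is isomorphic as a Lie algebra to some bracket of $\overline{\operatorname{B}\cdot\mu}$; hence it suffices to prove that \emph{no} bracket in $\overline{\operatorname{B}\cdot\mu}$ is isomorphic to $\mathfrak{n}_4$.

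The key structural fact is that $\langle e_3,e_4\rangle$ is the derived ideal of both $\mu_5(\lambda)$ and $\mu_6$, it is abelian, and — as is visible directly from the explicit matrix form of $\operatorname{N}$ for $\operatorname{Sp}(4,\mathbb{R})$ recalled above, together with $\operatorname{A}$ being diagonal — it is $\operatorname{B}$-invariant. Consequently every bracket in $\operatorname{B}\cdot\mu$, and (this being a closed linear condition) every bracket in $\overline{\operatorname{B}\cdot\mu}$, keeps $\langle e_3,e_4\rangle$ as an abelian ideal; in particular $\operatorname{ad}_x$ leaves $\langle e_3,e_4\rangle$ invariant for all $x\in\mathbb{R}^4$. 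I would then compute $g^{-1}\cdot\mu$ for $g=\operatorname{diag}(t_1,t_2,t_1^{-1},t_2^{-1})\,n$ with $n\in\operatorname{N}$, exactly as was done for $\mu_{18}$ in the previous example, and record the structure constants of a general $\eta\in\operatorname{B}\cdot\mu$. The features I expect to persist in the closure are: $\eta(e_1,e_3),\eta(e_1,e_4),\eta(e_2,e_3),\eta(e_2,e_4)$ all lie in $\langle e_3,e_4\rangle$ with $\eta(e_3,e_4)=0$; after a fixed relabelling the $2\times2$ matrix of $\operatorname{ad}^{\eta}_{e_i}|_{\langle e_3,e_4\rangle}$ is lower triangular; and $\operatorname{tr}\!\big(\operatorname{ad}^{\eta}_{\alpha e_1+\beta e_2}|_{\langle e_3,e_4\rangle}\big)$ equals, up to relabelling, $2\alpha A_1+\beta E_2$ with $A_1,E_2\ge 0$ (these being limits of the positive torus parameters $t_1,t_2$), while the linear relations among structure constants valid on the orbit remain valid on the closure.

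To finish, suppose some $\eta\in\overline{\operatorname{B}\cdot\mu}$ were isomorphic to $\mathfrak{n}_4$. Then $\eta$ is nilpotent, so every $\operatorname{ad}^{\eta}_x$, hence its restriction to the invariant subspace $\langle e_3,e_4\rangle$, is nilpotent and in particular traceless; demanding $2\alpha A_1+\beta E_2=0$ for all $\alpha,\beta$ forces $A_1=E_2=0$. Substituting this back into the parametrization collapses $\eta$ to $\eta(e_1,e_3)=\eta(e_1,e_4)=\eta(e_2,e_4)=\eta(e_3,e_4)=0$, $\eta(e_2,e_3)\in\langle e_4\rangle$, $\eta(e_1,e_2)\in\langle e_3,e_4\rangle$. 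Then $[\mathbb{R}^4,[\mathbb{R}^4,[\mathbb{R}^4,\mathbb{R}^4]]]_\eta=0$, i.e. $\eta$ is at most $2$-step nilpotent, so $(\mathbb{R}^4,\eta)$ is either abelian or isomorphic to $\mathfrak{h}_3\times\mathbb{R}$; in either case it is not isomorphic to the $3$-step nilpotent $\mathfrak{n}_4$, a contradiction. This argument treats $\mu_5(\lambda)$ (all $\lambda\ge 0$) and $\mu_6$ uniformly.

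The only delicate part I anticipate is bookkeeping: carrying out the Borel-orbit parametrization without error, and checking that each relation used in the last step — the vanishing of the relevant components of $\eta(e_1,e_3),\ldots,\eta(e_2,e_4)$ and of $\eta(e_3,e_4)$, the linear relations among structure constants, and the sign conditions $A_1,E_2\ge 0$ — is stable under the limits defining $\overline{\operatorname{B}\cdot\mu}$; the linear ones trivially, and the inequalities because $A_1,E_2$ are limits of $t_1,t_2>0$. Once the parametrization is in hand, the nilpotency/traceless argument is short.
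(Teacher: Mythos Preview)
Your overall strategy matches the paper's: reduce to $\overline{\operatorname{B}\cdot\mu}$ via Proposition~\ref{Borel}, use that $\langle e_3,e_4\rangle$ is a $\operatorname{B}$-invariant abelian ideal, and force constraints from nilpotency. The gap is in the final step, and it is fatal for $\mu_5(\lambda)$.

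After you correctly obtain $b_3=b_4=b_6=0$ (in the paper's notation for the enveloping subspace $W$), you list the surviving brackets as $\eta(e_1,e_2)\in\langle e_3,e_4\rangle$ and $\eta(e_2,e_3)\in\langle e_4\rangle$, and then assert that $\eta$ is at most $2$-step nilpotent. This is false: take $\eta(e_1,e_2)=e_3$ and $\eta(e_2,e_3)=e_4$; then $\mathfrak{g}^1=\langle e_3,e_4\rangle$, $\mathfrak{g}^2=\langle e_4\rangle$, $\mathfrak{g}^3=0$, so $\eta$ is $3$-step nilpotent and in fact isomorphic to $\mathfrak{n}_4$. Your identity $[\mathbb{R}^4,[\mathbb{R}^4,[\mathbb{R}^4,\mathbb{R}^4]]]_\eta=0$ only says ``at most $3$-step'', which does not exclude $\mathfrak{n}_4$. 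The underlying mistake is that you are effectively working in the \emph{linear span} $W$ of the $\operatorname{B}$-orbit (plus the harmless sign constraints $b_3,b_6\ge 0$, which become vacuous once you set them to zero), whereas $\overline{\operatorname{B}\cdot\mu_5(\lambda)}$ is a proper algebraic subset of $W$. The paper supplies the missing nonlinear relation: on $\operatorname{B}\cdot\mu_5(\lambda)$ one has $b_1b_5-b_2b_4=\lambda\,b_3b_4b_6^{\,2}$ (equivalently, $\det(\xi(e_1,e_2),\xi(e_2,e_3))$ equals this expression), and this persists on the closure; combined with $b_3=b_4=b_6=0$ it forces $b_1b_5=0$, so $\eta(e_1,e_2)$ and $\eta(e_2,e_3)$ are linearly dependent and $(\mathbb{R}^4,\eta)\not\cong\mathfrak{n}_4$. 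Your ``substitute $A_1=E_2=0$ back into the parametrization'' step cannot replace this, because points of the closure arise from sequences where $a,x,y,z$ may diverge while $t_1,t_2\to 0$.

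Two smaller issues: your trace formula $2\alpha A_1+\beta E_2$ is not what one gets for $\mu_5(\lambda)$ (the trace of $\operatorname{ad}_{e_2}|_{\langle e_3,e_4\rangle}$ is $b_4+b_6$, not $b_6$ alone); the clean way is to use that a nilpotent lower-triangular $2\times 2$ matrix has zero diagonal, which gives $b_3=b_4=b_6=0$ directly. And your argument does work for $\mu_6$, because there the surviving bracket $\eta(e_1,e_2)$ already lies in $\langle e_4\rangle$, so the derived algebra has dimension at most one; the paper's proof for $(\mathfrak{r}'_2,\omega)$ is essentially this.
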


\begin{proof}
The proof follows from an analysis of the $\operatorname{B}$-orbits of $\mu_{5}(\lambda)$ and $\mu_{6}$. If $g =\operatorname{diag}\left(t_1,t_2,\frac{1}{t_1} ,\frac{1}{t_2}\right) \in \operatorname{A}$ and $h \in \operatorname{N}$, then $\xi = (g\cdot h)^{-1} \cdot \mu_{5}(\lambda)$ is of the form:
$$
\begin{array}{l}
\xi({\it e_1},{\it e_2})= \left( ax+y-t_{{2}}t_{{1}}\lambda \right) t_{{1}}{\it e_3} + \left( -axt_{{2}}-yt_{{2}}-t_{{1}}{a}^{2}x-t_{{1}}ay+{t_{{1
}}}^{2}at_{{2}}\lambda \right) {\it e_4},\\
{\xi({\it e_1},{\it e_3})}=t_{{1}}{\it e_3}-t_{{1}}a{\it e_4},
\: \xi({\it e_2},{\it e_3})=-t_{{1}}a{\it e_3}+a \left( t_{{2}}+t_{{1}}a \right) {\it e_4},
\: \xi({\it e_2},{\it e_4})=t_{{2}}{\it e_4}.
\end{array}
$$

Therefore $\operatorname{B}\cdot \mu_{5}(\lambda)$ and its closure $\overline{\operatorname{B}\cdot \mu_{5}(\lambda)}$ are subsets of the subspace $W$ of $C^{2}_{\omega_{\tiny{\mbox{cn}}}}(\mathbb{R}^{2n};\mathbb{R}^{2n})$ given by
$$
W = \left\{\begin{array}{l}
\eta({\it e_1},{\it  e_2})=b_{{1}}{\it e_3}+b_{{2}}{\it e_4},
\eta({\it e_1},{\it e_3})=b_{{3}}{\it e_3}+b_{{4}}{\it e_4},\\
{\eta({\it e_2},{\it e_3})}=b_{{4}}{\it e_3} +b_{{5}}{\it e_4},
\eta({\it e_2},{\it e_4})=b_{{6}}{\it e_4}
\end{array}
: b_{i} \in \mathbb{R}
\right\}
$$

Now, assume for the sake of contradiction that $ \overline{\operatorname{B}\cdot \mu_{5}(\lambda)} \cap \operatorname{Sp}(4, \mathbb{R})\cdot \mu_{7}$
is nonempty, say, $\eta$ is a Lie algebra law in such intersection. Then there is a sequence $\{\xi_{k}\}_{k \in \mathbb{N}}$ in $\operatorname{B}\cdot \mu_{5}(\lambda)$ converging to $\eta$.

Since $\mathfrak{n}_{4}$ is a \textit{unimodular} Lie algebra, we then focus our attention on unimodular (Lie) algebra laws in the subspace $W$.
An easy computation shows that the $b_3$, $b_4$ and $b_6$ components of any unimodular algebra law in $W$ must be zero.
And so $\eta$ is of the form
$$
\eta({\it e_1},{\it e_2})=c_{{1}}{\it e_3}+c_{{2}}{\it e_4}, \: \eta({\it e_2},{\it e_3})=c_{{5}}{\it e_4}
$$
and, better yet, since $(\mathbb{R}^{4}, \eta)$ is isomorphic to $\mathfrak{n}_{4}$, it follows that $\eta(e_1,e_2)$ and $\eta(e_2,e_3)$ are linearly independent.

Let us write $b_{i}(k)$ for the $b_{i}$-component of $\xi_{k}$ for $i=1, \ldots, 6$. As $\xi_{k} \rightarrow \eta$ as $n \rightarrow +\infty$ we have
$b_{3}(k)$, $b_{4}(k)$ and $b_{6}(k)$ tend to $0$ as $k$ tends to $+\infty$. On the other hand, we have $\operatorname{det}( \xi_{k}({\it e_1},{\it e_2}) , \xi_{k}({\it e_2},{\it e_3}) ) = b_{1}(k) b_{5}(k) - b_{2}(k)b_{4}(k)$ is equal to $\lambda \ b_{3}(k) b_{4}(k) b_{6}^{2}(k)$; which can be deduced from the form of elements in $\operatorname{B}\cdot \mu_{5}(\lambda)$. Therefore
\begin{eqnarray*}
  \operatorname{det}(\eta({\it e_1},{\it e_2}) , \eta({\it e_2},{\it e_3}) )
  & = & \lim_{k \to +\infty}\operatorname{det}( \xi_{k}({\it e_1},{\it e_2}) , \xi_{k}({\it e_2},{\it e_3}) )\\
  & = & \lim_{k \to +\infty} \lambda \ b_{3}(k) b_{4}(k) b_{6}^{2}(k)\\
  & = & 0,
\end{eqnarray*}
and consequently $\eta({\it e_1},{\it e_2})$ and $\eta({\it e_2},{\it e_3})$ are linearly dependent, which is a contradiction.

The preceding proof can be summarized by saying that $\overline{\operatorname{B}\cdot \mu_{5}(\lambda)}$ is contained in the algebraic set
$$
\mathcal{Z}(\lambda) = \{ \xi \in W : b_{{1}} b_{{5}} - b_{{2}}b_{{4}} - \lambda\,b_{{3}}b_{{4}}{b_{{6}}}^{2} = 0\},
$$
while $\operatorname{GL}(4, \mathbb{R}) \cdot \mu_{7}$ does not intersect $\mathcal{Z}(\lambda)$.

The proof of the case with $(\mathfrak{r}'_{2},\omega)$  is notably much simpler than the preceding proof. It is straightforward to check that
$\operatorname{B}\cdot \mu_{6}$ is contained in the subspace $\widetilde{W} $ of $C^{2}_{\omega_{\tiny{\mbox{cn}}}}(\mathbb{R}^{2n};\mathbb{R}^{2n})$ defined by
$$
\widetilde{W} =
\left\{
\begin{array}{l}
\xi({\it e_1},{\it e_2})=b_{{1}}{\it e_4},
\xi({\it e_1},{\it e_3})=b_{{2}}{\it e_3},
\xi({\it e_1},{\it e_4})=b_{{2}}{\it e_4},\\
\xi({\it e_2},{\it e_3})=b_{{3}}{\it e_4},
\xi({\it e_2},{\it e_4})=b_{{2}}{\it e_3}+b_{{4}}{\it e_4}
\end{array} : b_i \in \mathbb{R}
\right\}
$$
and if $\eta$ is a unimodular (Lie) algebra law in $\widetilde{W}$, then its $b_2$ and $b_4$ component are zero, and so the dimension of the \textit{derived algebra} of $(\mathbb{R}^{4}, \eta)$ is less than or equal to $1$. Since the dimension of derived algebra of $\mathfrak{n}_{4}$ is $2$,
we have  $\operatorname{GL}(4, \mathbb{R}) \cdot \mu_{7}$ not intersect $\widetilde{W} \supseteq \overline{\operatorname{B}\cdot \mu_{6}}$. This completes the proof.
\end{proof}

\end{example}

By using arguments similar to those used above we obtain the following theorem:\newline

\noindent \textbf{{Theorem A}}  \textit{The Hasse diagram of degenerations in $\mathcal{L}_{\omega_{\tiny{\mbox{cn}}}}(\mathbb{R}^{4})$ is given by the union the following diagrams:}

\begin{enumerate}
\begin{multicols}{2}
\item[]
\begin{tikzpicture}
  \node[circle,fill,inner sep=0pt,minimum size=3pt,label=above:{$\mathfrak{r}_{2}\mathfrak{r}_2 | \lambda$}] (r2r2) at (-1,0) {};
  \node[circle,fill,inner sep=0pt,minimum size=3pt,label=above:{$\mathfrak{r}'_{2}$}] (r2prima) at (1,0) {};
  \node[circle,fill,inner sep=0pt,minimum size=3pt,label=right:{$\mathfrak{d}_{4,1|2}$}] (d412) at (1.5,-0.5) {};
  \node[circle,fill,inner sep=0pt,minimum size=3pt,label={[label distance=0.25cm,rotate=90]right:{$\mathfrak{d}_{4,1|1}$}}] (d411) at (0,-1.5) {};
  \node[circle,fill,inner sep=0pt,minimum size=3pt,label=right:{$\mathfrak{n}_{4}$}] (n4) at (1.5,-2) {};
  \node[circle,fill,inner sep=0pt,minimum size=3pt,label=left:{$\mathfrak{r}\mathfrak{r}_{3,0}$}] (rr30) at (-1,-3) {};
  \node[circle,fill,inner sep=0pt,minimum size=3pt,label=right:{$\mathfrak{r}\mathfrak{h}_{3}$}] (rh3) at (0,-4) {};
    \node[circle,fill,inner sep=0pt,minimum size=3pt,label=below:{$\mathfrak{a}_{4}$}] (a4) at (0,-5) {};
\draw [->] (r2r2) edge (d411) (r2prima) edge (d411);
\draw [->] (d412) edge (d411) (d412) edge (n4) (d411)edge (rh3);
\draw [->] (n4) edge (rh3);
\draw [->] (r2r2) edge (rr30);
\draw [->] (rr30) edge (rh3);
\draw [->] (rh3) edge (a4);
\end{tikzpicture}

\item[]
\begin{tikzpicture}
\node[label=above:$\,$] (emptyyy) at (0,0.5){};
 \node[circle,fill,inner sep=0pt,minimum size=3pt,label=left:{$\mathfrak{d}_{4,2|2}$}] (d422) at (-1,0) {};
 \node[circle,fill,inner sep=0pt,minimum size=3pt,label=right:{$\mathfrak{d}_{4,2|3}$}] (d423) at (+1,0) {};
 \node[circle,fill,inner sep=0pt,minimum size=3pt,label={[label distance=0.25cm,rotate=90]right:{$\mathfrak{r}_{4,-\frac{1}{2},\frac{1}{2}}$}}] (r4menos1unmedio) at (0 ,-1 ) {};
 \node[circle,fill,inner sep=0pt,minimum size=3pt,label=above:{$\mathfrak{h}_{4|\pm}$}] (h4masmenos) at ( -2, -1) {};
 \node[circle,fill,inner sep=0pt,minimum size=3pt,label=above:{$\mathfrak{r}_{4,0|\pm}$}] (r4masmenos) at (-1 ,-1 ) {};
 \node[circle,fill,inner sep=0pt,minimum size=3pt,label=above right:{$\mathfrak{d}_{4,2|1}$}] (d421) at ( 1, -1) {};
 \node[circle,fill,inner sep=0pt,minimum size=3pt,label=right:{$\mathfrak{r}_{4,-1}$}] (r4menos1) at ( 2, -1) {};
 \node[circle,fill,inner sep=0pt,minimum size=3pt,label=below right:{$\mathfrak{n}_{4}$}] (n4) at (0 ,-2 ) {};
 \node[circle,fill,inner sep=0pt,minimum size=3pt] (r4menos1menos1) at (2 ,-2 ) {};
 \node[ label={[label distance=-1.25cm,rotate=90]right:{$\mathfrak{r}_{4,-1,-1}$}}] (emptyyy) at (2,-2){};

 \node[circle,fill,inner sep=0pt,minimum size=3pt,label=below:{$\mathfrak{d}_{4,\frac{1}{2}}$}] (d4unmedio) at ( -2,-3 ) {};
 \node[circle,fill,inner sep=0pt,minimum size=3pt,label=below:{$\mathfrak{rr}_{3,0}$}] (rr3cero) at ( -1,-3 ) {};
 \node[circle,fill,inner sep=0pt,minimum size=3pt,label=right:{$\mathfrak{r}\mathfrak{h}_{3}$}] (rh3) at ( 0,-4 ) {};
 \node[circle,fill,inner sep=0pt,minimum size=3pt,label=below:{$\mathfrak{a}_{4}$}] (a4) at ( 0,-5 ) {};

\draw [->] (d422) edge (r4menos1unmedio) (d423) edge (r4menos1unmedio);
\draw [->] (d423) edge (d421);
\draw [->] (d421) edge (n4);
\draw [->] (r4menos1unmedio) edge (n4) (r4menos1) edge (n4);
\draw [->] (r4menos1) edge (r4menos1menos1);
\draw [->] (r4menos1menos1) edge (rh3);
\draw [->] (r4masmenos) edge (n4) (r4masmenos) edge (rr3cero);
\draw [->] (h4masmenos) edge (d4unmedio) (h4masmenos) edge (n4);
\draw [->] (n4) edge (rh3) (d4unmedio) edge (rh3) (rr3cero)  edge (rh3);
\draw [->] (rh3) edge (a4);
\end{tikzpicture}

\end{multicols}
\end{enumerate}

\begin{enumerate}
\begin{multicols}{2}

\item[]
\begin{tikzpicture}
\node[circle,fill,inner sep=0pt,minimum size=3pt,label=above:{$\begin{array}{c}  \mathfrak{d}_{4,\lambda} \\ {\scriptscriptstyle \lambda\neq \frac{1}{2},1,2} \end{array}$}] (d4lambda) at (-2,0) {};
\node[circle,fill,inner sep=0pt,minimum size=3pt,label=above:{$\mathfrak{d}^{'}_{4,\lambda}|\pm$}] (d4primalambda) at (-1,0) {};
\node[circle,fill,inner sep=0pt,minimum size=3pt,label=above:{$\begin{array}{c} \mathfrak{r}_{4,\alpha,-\alpha}  \\ {\scriptscriptstyle \alpha \neq -\frac{1}{2}} \end{array}$}] (r4alpha) at (+1,0) {};
\node[circle,fill,inner sep=0pt,minimum size=3pt,label=above:{$\begin{array}{c}  \mathfrak{r}_{4,-1,\beta} \\ {\scriptscriptstyle \beta \neq -1} \end{array}$}] (r4beta) at (+2,0) {};
\node[circle,fill,inner sep=0pt,minimum size=3pt,label=above:{$ \mathfrak{r}^{'}_{4,0,\delta} | \pm $}] (r4betaprima) at (+3,0) {};
\node[circle,fill,inner sep=0pt,minimum size=3pt,label=above:{$\mathfrak{n}_{4}$}] (n4) at (0,-1) {};
\node[circle,fill,inner sep=0pt,minimum size=3pt,label=right:{$\mathfrak{r}\mathfrak{h}_{3}$}] (rh3) at (0,-2) {};
\node[circle,fill,inner sep=0pt,minimum size=3pt,label=below:{$\mathfrak{a}_{4}$}] (a4) at (0,-3) {};


\node[circle,fill,inner sep=0pt,minimum size=3pt,label=above:{$\mathfrak{r}\mathfrak{r}_{3,-1}$}] (r3menos) at (+4.5,+0) {};
\node[circle,fill,inner sep=0pt,minimum size=3pt,label=above:{$\mathfrak{r}\mathfrak{r}^{'}_{3,0}$}] (r3prima) at (+6.5,+0) {};
\node[circle,fill,inner sep=0pt,minimum size=3pt,label=above:{$\mathfrak{n}_{4}$}] (n4B) at (+5.5,-1) {};
\node[circle,fill,inner sep=0pt,minimum size=3pt,label=left:{$\mathfrak{r}\mathfrak{h}_{3}$}] (rh3B) at (+5.5,-2) {};
\node[circle,fill,inner sep=0pt,minimum size=3pt,label=below:{$\mathfrak{a}_{4}$}] (a4B) at (+5.5,-3) {};


\draw [->]  (d4lambda) edge(n4)  (d4primalambda) edge(n4)  (r4alpha) edge(n4)  (r4beta) edge(n4) (r4betaprima) edge(n4);
\draw [->]  (n4) edge (rh3);
\draw [->] (rh3) edge (a4);


\draw [->] (r3menos) edge (n4B) (r3prima) edge (n4B);
\draw [->] (n4B) edge(rh3B);
\draw [->] (rh3B) edge(a4B);

\end{tikzpicture}

\end{multicols}
\end{enumerate}

\section{Applications}

Some geometric quantities vary ``continuously'' as functions of tensor spaces. This vague assertion gains in interest if we consider invariant geometric structures on \textit{homogeneous spaces} in which such structures are determined by a tensor defined on a fixed tangent space.

If we have a Lie bracket $\mu \in \mathcal{L}_{\omega_{\tiny{\mbox{cn}}}}(\mathbb{R}^{2n})$ and, in consequence, a symplectic Lie algebra $(\mathbb{R}^{2n},\mu,{\omega_{\tiny{\mbox{cn}}}})$, then we automatically have an almost-K\"ahler structure on the Lie algebra $(\mathbb{R}^{2n},\mu)$ given by the triple $(\langle \cdot, \cdot \rangle_{\tiny{\mbox{cn}}} , {\omega_{\tiny{\mbox{cn}}}}, J_{\tiny{\mbox{cn}}})$, where $\langle \cdot, \cdot \rangle_{\tiny{\mbox{cn}}}$ denotes the (standard) dot product on $\mathbb{R}^{2n}$ and $J_{\tiny{\mbox{cn}}}$ is the linear map defined by $J_{\tiny{\mbox{cn}}} e_{k} = e_{n+k}$ and $J_{\tiny{\mbox{cn}}}e_{n+k} = -e_k$ for $k = 1,\ldots, n$.

On the other hand, suppose $V$ is a real vector space, $\langle \cdot,\cdot \rangle$ is a positive-definite inner product on $V$ and $\omega$ is a symplectic structure on $V$. If $J:V\rightarrow V$ defined implicitly by $\omega(\cdot, \centerdot ) =  \langle J\cdot, \centerdot \rangle$ is such that $J^2 = -\operatorname{Id}_{V}$, then it is easy to prove that there exists an ordered orthonormal basis for $(V,\langle \cdot,\cdot \rangle)$ of the form $(e_1, \ldots, e_n , Je_1, \ldots Je_n)$, which is clearly a symplectic basis for $(V,\omega)$ (since $J$ is a \textit{complex structure} on $V$, recall that it allows us to regard $V$ as a complex vector space and then we can consider the positive-definite Hermitian inner product on $(V,J)$ defined by $\langle \! \langle \cdot , \centerdot \rangle \! \rangle = \langle \cdot , \centerdot \rangle + \sqrt{-1}\omega(\cdot , \centerdot)$; which admits an orthonormal basis). Thus, if $(\langle \cdot,\cdot \rangle , \omega, J)$ is an almost K\"ahler structure on a Lie algebra $\mathfrak{g}$, then $(\mathfrak{g},\langle \cdot,\cdot \rangle , \omega, J)$ is equivalent to $(\mathbb{R}^{2n}, \mu, \langle \cdot, \cdot \rangle_{\tiny{\mbox{cn}}} , {\omega_{\tiny{\mbox{cn}}}}, J_{\tiny{\mbox{cn}}} )$ with $\mu \in \mathcal{L}_{\omega_{\tiny{\mbox{cn}}}}(\mathbb{R}^{2n})$.

Therefore, when we ``walk along'' the $\operatorname{Sp}(2n, \mathbb{R})$-orbit of a Lie algebra law $\mu \in \mathcal{L}_{\omega_{\tiny{\mbox{cn}}}}(\mathbb{R}^{2n})$, we obtain different compatible inner products with the symplectic Lie algebra $(\mathbb{R}^{2n},\mu,{\omega_{\tiny{\mbox{cn}}}})$ and the Hasse diagram of $\mathcal{L}_{\omega_{\tiny{\mbox{cn}}}}(\mathbb{R}^{2n})$
is a kind of map that shows how the $\operatorname{Sp}(2n, \mathbb{R})$-orbits are interconnected.

We can use the ideas presented in the preceding sections to study evolution of left invariant almost-K\"ahler structures on Lie groups under geometric flows, cohomological aspects or deformation theory of left invariant symplectic structures on Lie Groups, among other significant potential applications.
In this section, we give an application of \textbf{Theorem A} to the problem of studying curvature properties of left invariant almost K\"ahler structures on Lie groups.

We begin by defining the continuous function $\:\: \square^{\tiny{\mbox{$\mathscr{LC}$}}}\!: C^{2}(\mathbb{R}^{n};\mathbb{R}^{n}) \rightarrow L^{2}(\mathbb{R}^{n};\mathbb{R}^{n})$ defined implicitly by
$$
2 \langle (\mu^{{\tiny{\mbox{$\mathscr{LC}$}}}}(v_1,v_2) , v_3 \rangle_{\tiny{\mbox{cn}}} =
\langle \mu(v_1 , v_2) ,v_3 \rangle_{\tiny{\mbox{cn}}}
-\langle \mu(v_2 , v_3) ,v_1 \rangle_{\tiny{\mbox{cn}}}
+\langle \mu(v_3 , v_1) ,v_2 \rangle_{\tiny{\mbox{cn}}}.
$$
If $(\mathbb{R}^{n},\mu)$ is a Lie algebra, then $\mu^{{\tiny{\mbox{$\mathscr{LC}$}}}}$ is the Levi-Civita connection of  $(\mathbb{R}^{n},\mu, \langle \cdot,\cdot\rangle_{\tiny{\mbox{cn}}})$.

Now consider the continuous functions $\mathscr{R}: C^{2}(\mathbb{R}^{n};\mathbb{R}^{n}) \rightarrow L^{3}(\mathbb{R}^{n};\mathbb{R}^{n})$ given by
\begin{eqnarray*}
(\mathscr{R}\mu)(v_1,v_2,v_3) &=&
\mu^{{\tiny{\mbox{$\mathscr{LC}$}}}}(v_1, \mu^{{\tiny{\mbox{$\mathscr{LC}$}}}}(v_2,v_3))
-\mu^{{\tiny{\mbox{$\mathscr{LC}$}}}}(v_2, \mu^{{\tiny{\mbox{$\mathscr{LC}$}}}}(v_1,v_3))\\
& &-\mu^{{\tiny{\mbox{$\mathscr{LC}$}}}}(\mu(v_1,v_2),v_3))
\end{eqnarray*}
and its $2\mbox{-nd}$ trace, $\operatorname{Ric} =  \operatorname{tr}_2 \circ \mathscr{R}: C^{2}(\mathbb{R}^{n};\mathbb{R}^{n}) \rightarrow L^{2}(\mathbb{R}^{n};\mathbb{R}) $; i.e. $(\operatorname{Ric} \mu)(v_1,v_3) = \operatorname{trace}((\mathscr{R}\mu)(v_1,\cdot,v_3))$. When $(\mathbb{R}^{n},\mu)$ is a Lie algebra, $\mathscr{R}\mu$ and $\operatorname{Ric}\mu$ are the \textit{Riemann curvature tensor} and the \textit{Ricci curvature tensor} of $(\mathbb{R}^{n},\mu, \langle \cdot,\cdot\rangle_{\tiny{\mbox{cn}}})$, respectively.

It follows from \textbf{Theorem A} that almost any four dimensional symplectic Lie algebra degenerates to $(\mathfrak{n}_{4},\omega)$. In the following lemma we study the Ricci curvature of some compatible inner products with $(\mathfrak{n}_{4},\omega)$ and other symplectic Lie algebras, which will
be useful in our proof of \textbf{Theorem B}.

\begin{lemma}\label{lemma}
The symplectic Lie algebras $(\mathfrak{n}_{4},\omega)$, $(\mathfrak{d}_{4,\frac{1}{2}},\omega)$, $(\mathfrak{d}_{4,1},\omega_{1})$ and  $(\mathfrak{r}_{4,-1,-1},\omega)$  admits compatible inner products with nondegenerate Ricci form of signature $(-,-,-,+)$.
\end{lemma}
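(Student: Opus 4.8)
The plan is to exhibit, for each of the four symplectic Lie algebras, a single explicit element $g \in \operatorname{Sp}(4,\mathbb R)$ (equivalently, one compatible inner product obtained by pulling back $\langle\cdot,\cdot\rangle_{\operatorname{cn}}$ along $g$, i.e.\ a point $g\cdot\mu_i$ on the orbit) and then simply compute $\operatorname{Ric}(g\cdot\mu_i)$ using the formula for $\operatorname{Ric} = \operatorname{tr}_2\circ\mathscr R$ recalled just above the statement, checking that the resulting symmetric bilinear form on $\mathbb R^4$ has eigenvalues of signs $(-,-,-,+)$. Concretely, for each $\mu_i$ ($i=7$ for $\mathfrak n_4$, $i=21$ or the relevant label for $\mathfrak d_{4,1/2}$, $i=15$ for $(\mathfrak d_{4,1},\omega_1)$, and the $\mathfrak r_{4,-1,-1}$ law) I would: (i) write down the structure constants from Tables 1--2; (ii) compute the Levi-Civita connection $\mu^{\mathscr{LC}}$ from the displayed Koszul-type formula; (iii) assemble $\mathscr R\mu$ and take the second trace to get $\operatorname{Ric}\mu$ as a $4\times4$ symmetric matrix; (iv) if this matrix is already nondegenerate of signature $(-,-,-,+)$ we are done for that algebra, otherwise replace $\mu$ by $g(u)\cdot\mu$ for a one-parameter diagonal symplectic $g(u) = \operatorname{diag}(e^{u_1},e^{u_2},e^{-u_1},e^{-u_2})$ (these obviously lie in $\operatorname{Sp}(4,\mathbb R)$, as used in Example 3.2) and recompute, tuning the parameters until the signature condition holds.

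The only genuinely conceptual point is why one should expect \emph{some} point of the orbit to have this signature at all: for the nilpotent algebra $\mathfrak n_4$ the Ricci form of any left-invariant metric has both a positive and a negative eigenvalue (Milnor-type sign constraints for nilpotent groups), and scaling the metric along the torus $\operatorname{A}\cap\operatorname{Sp}(4,\mathbb R)$ lets one push the remaining eigenvalues to be strictly negative; for the three solvable algebras one argues similarly, or just checks that the ``standard'' compatible metric already works. So the substance of the proof is the four explicit matrix computations, and the role of Theorem~A here is only motivational (it tells us these are exactly the algebras to which almost everything degenerates, hence the ones whose curvature we must control for Theorem~B).

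The main obstacle will be purely computational bookkeeping: the formula for $\mu^{\mathscr{LC}}$ is implicit (it gives $\langle\mu^{\mathscr{LC}}(v_1,v_2),v_3\rangle_{\operatorname{cn}}$, so one must solve for $\mu^{\mathscr{LC}}(v_1,v_2)$ componentwise), and then $\mathscr R$ involves a composition of two such connection terms plus the $\mu(v_1,v_2)$ correction, so the entries of the final $\operatorname{Ric}$ matrix are moderately involved polynomials in the scaling parameters $e^{u_1},e^{u_2}$. I would organize the computation by first recording $\mu^{\mathscr{LC}}(e_a,e_b)$ for all $a,b$ in a table, then computing the diagonal entries $(\operatorname{Ric}\mu)(e_a,e_a) = \sum_b \langle(\mathscr R\mu)(e_a,e_b,e_a),e_b^{\flat}\rangle$-type sums, exploiting the sparseness of the brackets (each $\mu_i$ has only a handful of nonzero structure constants) to keep each sum short, and finally invoking continuity/openness of the signature condition to conclude that a suitable choice of $u_1,u_2$ (or even the base point $u_1=u_2=0$) gives the claimed signature $(-,-,-,+)$; I would present the chosen metric and the resulting Ricci matrix explicitly and leave the eigenvalue sign check to the reader.
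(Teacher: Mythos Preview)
Your overall strategy---pick a point on the $\operatorname{Sp}(4,\mathbb R)$-orbit and compute the Ricci form there---is exactly what the paper does, and for $(\mathfrak n_4,\omega)$ and $(\mathfrak r_{4,-1,-1},\omega)$ your plan will succeed (the paper uses the diagonal family $g(t)=\operatorname{diag}(1/t,1,t,1)$ for the first and the base point for the second, both covered by your step~(iv)).

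There is, however, a genuine gap for $(\mathfrak d_{4,\frac12},\omega)$, and most likely for $(\mathfrak d_{4,1},\omega_1)$ as well. The Lie bracket $\mu_{20}(\tfrac12)$ (not $\mu_{21}$, which is $\mathfrak d'_{4,\delta}$) has the special feature that the diagonal symplectic torus $g(u)=\operatorname{diag}(e^{u_1},e^{u_2},e^{-u_1},e^{-u_2})$ acts on it by a \emph{scalar}: a short check of the four nonzero brackets shows $g(u)\cdot\mu_{20}(\tfrac12)=e^{-u_1}\,\mu_{20}(\tfrac12)$. Since $\operatorname{Ric}$ is homogeneous of degree $2$ in $\mu$, this rescaling leaves the signature unchanged. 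But the base point $(\mathbb R^4,\mu_{20}(\tfrac12),\langle\cdot,\cdot\rangle_{\operatorname{cn}})$ is Einstein with negative scalar curvature, so its Ricci form has signature $(-,-,-,-)$. Hence neither the base point nor any diagonal rescaling of it will ever give you $(-,-,-,+)$; the same homogeneity argument applies verbatim to $\mu_{15}$. You must leave the maximal torus and use a unipotent element of $\operatorname{Sp}(4,\mathbb R)$. The paper takes
\[
h(t)e_1=e_1-te_4,\quad h(t)e_2=e_2-te_3,\quad h(t)e_3=e_3,\quad h(t)e_4=e_4,
\]
and verifies that $h(12)\cdot\mu_{20}(\tfrac12)$ and $h(2)\cdot\mu_{15}$ have Ricci signature $(-,-,-,+)$.

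A smaller remark on efficiency: rather than going through the full Koszul--curvature--trace pipeline you describe, the paper invokes Alekseevski\u{\i}'s closed formula for the Ricci form of a metric Lie algebra, which in the nilpotent case $\mathfrak n_4$ reduces to the two sums $-\tfrac12\sum_{i,j}\langle\eta(v,e_i),e_j\rangle^2+\tfrac12\sum_{i<j}\langle\eta(e_i,e_j),v\rangle^2$ and makes the diagonal Ricci entries immediate.
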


\begin{proof}
Put $g(t) = \operatorname{diag}(\frac{1}{t},1,t,1) \in \operatorname{Sp}(4, \mathbb{R})$ and let $\xi_{t}:= g(t)\cdot \mu_{7}$. Then $(\mathfrak{n}_{4},\omega)$ is symplectomorphically equivalent to $(\mathbb{R}^{4},\xi_{t}, {\omega_{\tiny{\mbox{cn}}}})$ with:
$$
\xi_{t}= \left\{
\xi_{t} ({e_1},{e_2}) =t{e_4}, \:
\xi_{t} ({e_1},{e_4}) ={t}^{2}{e_3}
\right.
$$

Now we endow $(\mathbb{R}^{4},\xi_{t}, {\omega_{\tiny{\mbox{cn}}}})$ with the compatible inner product $\langle \cdot, \cdot \rangle_{\tiny{\mbox{cn}}}$
and calculate the respective Ricci form. For computational purposes, it is much more convenient to use the well-known formula for computing the Ricci form of a metric Lie algebra $(\mathbb{R}^{m},\eta,\langle \cdot, \cdot \rangle_{\tiny{\mbox{cn}}})$; which is due to Dmitrii V. Alekseevski\u{\i} as far as we know  (see \cite[formula (2.5)]{Alekseevskii}). If $(\mathbb{R}^{m},\eta)$ is a nilpotent Lie algebra, then the formula reduces to
$$
(\operatorname{Ric}\eta)(v,v) = \displaystyle -\frac{1}{2} \sum_{i,j} \langle \eta(v,e_i),e_j \rangle_{\tiny{\mbox{cn}}}^{2} + \frac{1}{2} \sum_{i<j} \langle \eta(e_i,e_j),v \rangle_{\tiny{\mbox{cn}}}^{2}.
$$

Therefore, we have that the matrix of the Ricci form of $(\mathbb{R}^{4},\xi_{t},\langle \cdot,\cdot \rangle_{\tiny{\mbox{cn}}}, {\omega_{\tiny{\mbox{cn}}}})$ with respect to the canonical basis of $\mathbb{R}^4$ is the diagonal matrix:
$$
\operatorname{diag}\left(-\frac{1}{2} {t}^{2} - \frac{1}{2} {t}^{4} , -\frac{1}{2} {t}^{2}, \frac{1}{2}{t}^{4} ,
\frac{1}{2}{t}^{2} - \frac{1}{2} {t}^{4}\right),
$$
and so the signature of the Ricci form of $(\mathbb{R}^{4},\xi_{t},\langle \cdot,\cdot \rangle_{\tiny{\mbox{cn}}}, {\omega_{\tiny{\mbox{cn}}}})$ is $(-,-,+,+)$ if $0<|t|<1$, and it is $(-,-,-,+)$ if $|t|>1$.

To study the cases in which the symplectic Lie algebra is $(\mathfrak{d}_{4,\frac{1}{2}},\omega)$ or $(\mathfrak{d}_{4,1},\omega_{1})$, we consider the linear transformations $h(t) \in \operatorname{Sp}(4, \mathbb{R})$ defined by
$$h(t)e_1 = e_1-t e_4, \; h(t)e_2 = e_2 -te_3, \; h(t)e_3 = e_3 \mbox{ and } h(t)e_4= e_4,$$
and let $\rho_t = h(t)\cdot \mu_{20}(\frac{1}{2})$ and $\varrho_{t} = h(t)\cdot \mu_{15}$. We have $(\mathfrak{d}_{4,\frac{1}{2}},\omega)$ and $(\mathfrak{d}_{4,1},\omega_{1})$ are symplectomorphically equivalent to $(\mathbb{R}^{4},\rho_{t}, {\omega_{\tiny{\mbox{cn}}}})$ and $(\mathbb{R}^{4},\varrho_{t}, {\omega_{\tiny{\mbox{cn}}}})$,  respectively, where:
$$
\rho_{t} = \left\{
\rho_{t}({\it e_1},{\it e_2})=\frac{1}{2}\,{\it e_2} - \frac{1}{2}\,t{\it e_3},\,
\rho_{t}({\it e_1},{\it e_3})={\it e_3},\,
\rho_{t}({\it e_1},{\it e_4})=\frac{1}{2}\,{\it e_4},\,
\rho_{t}({\it e_2},{\it e_4})={\it e_3}
\right.
$$
and
$$
\varrho_{t}= \displaystyle \left\{
\varrho_{t}({\it e_1},{\it e_2})={\it e_2}-t{\it e_3},\,
\varrho_{t}({\it e_1},{\it e_3})={\it e_3},\,
\varrho_{t}({\it e_2},{\it e_4})={\it e_3} \phantom{\frac{1}{1}}
\right.
$$
and by \cite[formula (2.5)]{Alekseevskii}) again, it is easy to verify that the signature of the Ricci form of $(\mathbb{R}^{4},\rho_{12}, \langle \cdot, \cdot \rangle_{\tiny{\mbox{cn}}}, {\omega_{\tiny{\mbox{cn}}}})$ and $(\mathbb{R}^{4},\varrho_{2}, \langle \cdot, \cdot \rangle_{\tiny{\mbox{cn}}}, {\omega_{\tiny{\mbox{cn}}}})$ is $(-,-,-,+)$.  It is worth noting that $(\langle \cdot, \cdot \rangle_{\tiny{\mbox{cn}}}, {\omega_{\tiny{\mbox{cn}}}})$ determines on $\mathfrak{d}_{4,\frac{1}{2}} \cong (\mathbb{R}^{4},\rho_{0})$ an \textit{Einstein almost K\"ahler structure} with negative scalar curvature.

Finally, the $(\mathfrak{r}_{4,-1,-1},\omega)$ case is the simplest one to handle because the canonical inner product $\langle \cdot, \cdot \rangle_{\tiny{\mbox{cn}}}$ is compatible with $(\mathbb{R}^{4}, \mu_{11}(-1) , {\omega_{\tiny{\mbox{cn}}}})$ and the matrix of the Ricci form of $(\mathbb{R}^{4}, \mu_{11}(-1) , \langle \cdot, \cdot \rangle_{\tiny{\mbox{cn}}})$ with respect to the canonical basis is $\operatorname{diag}(-3,-1,-1,1)$.
\end{proof}

\begin{remark}
The Proposition 22 in \cite{KremlevNikonorov} says that the signature of the Ricci form of all inner products on the Lie algebra $A^{1}_{4,9}$ is $(-,-,-,+)$.
This statement is incorrect, since the Lie algebra $\mathfrak{d}_{4,\frac{1}{2}} \cong (\mathbb{R}^{4},\rho_{0})$ is isomorhphic to $A^{1}_{4,9}$ and the signature of the Ricci form of $(\mathbb{R}^{4},\rho_{0}, \langle \cdot, \cdot \rangle_{\tiny{\mbox{cn}}})$ is $(-,-,-,-)$ and
there is a $\widehat{t} \in (0,12)$ such that the Ricci form of $(\mathbb{R}^{4},\rho_{\widehat{t}}, \langle \cdot, \cdot \rangle_{\tiny{\mbox{cn}}})$ has signature $(-,-,-,0)$.

.\newline
\end{remark}

\begin{proofB}
Let $(\mathbb{R}^{4},\mu_{i},{\omega_{\tiny{\mbox{cn}}}})$ be a $4$-dimensional symplectic Lie algebra with $i \neq 0,1,2$ and let
$A = \{ \xi_{2} , \rho_{12} , \varrho_{2}, \mu_{11}(-1) \}$ the set of the Lie algebra laws obtained in the preceding lemma.

From \textbf{Theorem A},  it is clear that $(\mathbb{R}^{4},\mu_{i},{\omega_{\tiny{\mbox{cn}}}})$ degenerates to $(\mathbb{R}^{4}, \zeta, {\omega_{\tiny{\mbox{cn}}}})$ with respect to $\operatorname{Sp}(4, \mathbb{R})$, for some $\zeta \in A$. So, there exists a sequence the symplectic transformations $\{s_{k}\}_{k\in \mathbb{N}} \subseteq \operatorname{Sp}(4, \mathbb{R})$ such that $s_{k} \cdot \mu_{i}$  tends to $\zeta$ as $k$ tends to $+\infty$. Since the function $\operatorname{Ric}$ is continuous, we have $\operatorname{Ric}s_{k} \cdot \mu_{i}$ tends to $\operatorname{Ric} \zeta$ as $k$ tends to $+\infty$.

The previous lemma shows that $\operatorname{Ric}\zeta$ has nondegenerate signature $(-,-,-,+)$ and since such signature is an \textit{open condition} in the vector space of all symmetric bilinear forms, $S^{k}(V^{\ast})$, (see for instance the very nice survey \cite{GhysRanicki}), we have an open neighborhood $U \subseteq S^{k}(V^{\ast})$ containing $\operatorname{Ric}\zeta$ such that for each bilinear form $\kappa \in U$, the signature of $\kappa$ is $(-,-,-,+)$.
It follows from the convergence of the sequence $\{\operatorname{Ric}s_{k} \cdot \mu_{i} \}$ that there exists an index $N$ so that $\operatorname{Ric}s_{N} \cdot \mu_{i}$ has signature $(-,-,-,+)$ or in other words, $\langle \cdot, \cdot \rangle_{\tiny{\mbox{cn}}}$ is a compatible inner product with $(\mathbb{R}^{4}, s_{N} \cdot \mu_{i}, {\omega_{\tiny{\mbox{cn}}}} )$ with such signature.

Conversely, it is straightforward to prove that the Ricci form of any inner product on $\mathfrak{h}_{3} \times \mathbb{R}$, $\mathfrak{aff}(\mathbb{R})\times \mathbb{R}^2$ or $\mathfrak{a}_{4}$ is degenerate.
\end{proofB}

\section{Remarks}
As we mentioned above, the study of the variety of symplectic Lie algebras has promising uses in homogeneous geometry and representation theory. In addition to those already mentioned, the existence of \textit{open orbits} or \textit{closed orbits} in $\mathcal{L}_{\omega_{\tiny{\mbox{cn}}}}(\mathbb{R}^{2n})$ allows us to define a kind of distinguished symplectic structures on Lie algebras. For instance, it follows easily from results obtained by Albert Nijenhuis and Roger Richardson in \cite[\S 24]{NijenhuisRichardson}, that the orbits $\operatorname{Sp}(4, \mathbb{R}) \cdot \mu_{18}$ and $\operatorname{Sp}(4, \mathbb{R}) \cdot \mu_{19}$, which correspond to the symplectic structures $\omega_{2}$ and $\omega_{3}$ on $\mathfrak{d}_{4,2}$, are open subsets of $\mathcal{L}_{\omega_{\tiny{\mbox{cn}}}}(\mathbb{R}^{4})$.

About closed orbits in the variety of symplectic Lie algebras, let us mention that there exist nontrivial such orbits in $\mathcal{L}_{\omega_{\tiny{\mbox{cn}}}}(\mathbb{R}^{6})$. Let $\tau$ be the Lie algebra law
$$
\tau =
\left\{
\tau({\it e_1},{\it e_3}) =  {\it e_3},\,
\tau({\it e_1},{\it e_6}) = -{\it e_6},\,
\tau({\it e_2},{\it e_4}) =  {\it e_5},\,
\tau({\it e_4},{\it e_5}) =  {\it e_2}.
\right.
$$
The Lie algebra $(\mathbb{R}^{6},\tau)$ is isomorphic to $\mathfrak{rr}_{3,0}\times \mathfrak{rr}'_{3,0}$ and
by using results of Patrick Eberlein and Michael Jablonski \cite{EberleinJablonski}, an easy computation shows that the $\operatorname{Sp}(6, \mathbb{R})$-orbit of $\tau$ is a closed subset of $\mathcal{L}_{\omega_{\tiny{\mbox{cn}}}}(\mathbb{R}^{6})$.
Independently of the geometric applications of closed orbits of $\mathcal{L}_{\omega_{\tiny{\mbox{cn}}}}(\mathbb{R}^{2n})$, recall that such sets are \textit{distinguished}; from a invariant-theoric point of view.



\newgeometry{margin=1.5cm}
\section*{Appendix}
Here we list essential degenerations to obtain \textbf{Theorem A.}

\begin{itemize}
  \item $(\mathfrak{d}_{4,2},\omega_{2}) \longrightarrow (\mathfrak{r}_{4,-\frac{1}{2},\frac{1}{2}},\omega)$: $g_{t}\cdot \mu_{ 18} \xrightarrow[{t \to +\infty}]{}\mu_{12}(-\frac{1}{2})$ with $g_{t} = \operatorname{diag}\left( 1,{{\rm e}^{\frac{1}{2},t}} , 1 ,{{\rm e}^{-\frac{1}{2},t}}\right)$.

  \item $(\mathfrak{d}_{4,2},\omega_{3}) \longrightarrow (\mathfrak{r}_{4,-\frac{1}{2},\frac{1}{2}},\omega)$: $g_{t}\cdot \mu_{19 } \xrightarrow[{t \to +\infty}]{} \mu_{12}(-\frac{1}{2})$ with $g_{t} = \operatorname{diag}\left( 1,{{\rm e}^{\frac{1}{2},t}} , 1 ,{{\rm e}^{-\frac{1}{2},t}}\right)$.

  \item $(\mathfrak{d}_{4,2},\omega_{3}) \longrightarrow (\mathfrak{d}_{4,2},\omega_{1})$: $g_{t}\cdot \mu_{19 } \xrightarrow[{t \to +\infty}]{} \mu_{17}$ with $g_{t} = \left[ \begin {array}{cccc} 0&1&0&0\\ \noalign{\medskip}0&0&-{{\rm e}
^{-t}}&0\\ \noalign{\medskip}0&0&1&1\\ \noalign{\medskip}{{\rm e}^{t}}
&-{{\rm e}^{t}}&0&0\end {array} \right]$.

   \item $(\mathfrak{r}_{2}\mathfrak{r}_{2},\omega_{\lambda}) \longrightarrow (\mathfrak{d}_{4,1},\omega_{1})$: $g_{t}\cdot \mu_{5 }(\lambda) \xrightarrow[{t \to +\infty}]{}\mu_{15}$ with $g_{t} =  \left[ \begin {array}{cccc} 0&1&0&0\\ \noalign{\medskip}0&0&{{\rm e}^
{-t}}&0\\ \noalign{\medskip}0&0&1&1\\ \noalign{\medskip}-{{\rm e}^{t}}
&{{\rm e}^{t}}&0&0\end {array} \right].$

  \item $(\mathfrak{r}_{2}\mathfrak{r}_{2},\omega_{\lambda}) \longrightarrow (\mathfrak{rr}_{3,0},\omega)$: $g_{t}\cdot \mu_{5 } \xrightarrow[{t \to +\infty}]{} \mu_{2}$ with $g_{t} = \operatorname{diag}(1,{{\rm e}^{t}},1,{{\rm e}^{-t}})$.

  \item     $(\mathfrak{r}{'}_{2},\omega) \longrightarrow (\mathfrak{d}_{4,1},\omega_{1})$: $g_{t}\cdot \mu_{6 } \xrightarrow[{t \to +\infty}]{} \mu_{15}$ with $g_{t} = \left[ \begin {array}{cccc} 1&0&0&0\\ \noalign{\medskip}0&0&0&{
{\rm e}^{\frac{1}{2}\,t}}\\ \noalign{\medskip}0&0&1&0\\ \noalign{\medskip}0&-{
{\rm e}^{-\frac{1}{2}\,t}}&0&0\end {array} \right].$

\item $(\mathfrak{d}_{4,1},\omega_{2}) \longrightarrow (\mathfrak{d}_{4,1},\omega_{1})$: $g_{t}\cdot \mu_{16 } \xrightarrow[{t \to +\infty}]{} \mu_{15}$ with $g_{t} = \operatorname{diag}(1,{{\rm e}^{-t}},1,{{\rm e}^{t}})$.

\item $(\mathfrak{d}_{4,1},\omega_{2}) \longrightarrow (\mathfrak{n}_{4},\omega)$: $g_{t}\cdot \mu_{ 16} \xrightarrow[{t \to +\infty}]{} \mu_{7}$ with $g_{t} = \left[ \begin {array}{cccc} {{\rm e}^{t}}&0&0&0\\ \noalign{\medskip}0
&{{\rm e}^{2\,t}}&0&0\\ \noalign{\medskip}0&{{\rm e}^{4\,t}}&{{\rm e}^
{-t}}&0\\ \noalign{\medskip}{{\rm e}^{3\,t}}&{{\rm e}^{3\,t}}&0&{
{\rm e}^{-2\,t}}\end {array} \right].$

\item $(\mathfrak{d}_{4,1},\omega_{1}) \longrightarrow (\mathfrak{rh}_{3},\omega)$: $g_{t}\cdot \mu_{15 } \xrightarrow[{t \to +\infty}]{} \mu_{1}$ with $g_{t} = \left[ \begin {array}{cccc} -{{\rm e}^{t}}&0&0&0\\ \noalign{\medskip}0
&1&0&0\\ \noalign{\medskip}0&-{{\rm e}^{t}}&-{{\rm e}^{-t}}&0
\\ \noalign{\medskip}{{\rm e}^{2\,t}}&0&0&1\end {array} \right].$

\item $(\mathfrak{rr}_{3,0},\omega) \longrightarrow (\mathfrak{rh}_{3},\omega) $: $g_{t}\cdot \mu_{ 2} \xrightarrow[{t \to +\infty}]{} \mu_{1}$ with $g_{t} = \left[ \begin {array}{cccc} {{\rm e}^{t}}&0&0&0\\ \noalign{\medskip}0
&1&0&0\\ \noalign{\medskip}0&-{{\rm e}^{t}}&{{\rm e}^{-t}}&0
\\ \noalign{\medskip}-{{\rm e}^{2\,t}}&0&0&1\end {array} \right].$

\item $(\mathfrak{h}_{4},\omega_{+}) \longrightarrow  (\mathfrak{d}_{4,\frac{1}{2}} , \omega)$: $g_{t}\cdot \mu_{23 } \xrightarrow[{t \to +\infty}]{} \mu_{20}(\frac{1}{2})$ with $g_{t} = \operatorname{diag}(1,{{\rm e}^{-\frac{1}{2}\,t}},1,{{\rm e}^{\frac{1}{2}\,t}})$.

\item $(\mathfrak{h}_{4},\omega_{+}) \longrightarrow (\mathfrak{n}_{4},\omega)$: $g_{t}\cdot \mu_{ 23} \xrightarrow[{t \to +\infty}]{} \mu_{7}$ with $g_{t} = \left[ \begin {array}{cccc} -\frac{1}{4}\,{{\rm e}^{2\,t}}&0&0&0\\
     \noalign{\medskip}0&{{\rm e}^{-t}}&0&0\\
     \noalign{\medskip}0&-\frac{1}{4}\,{{\rm e}^{3\,t}}&-4\,{{\rm e}^{-2\,t}}&0\\
     \noalign{\medskip}\frac{1}{16}\,{{\rm e}^{6\,t}}&\frac{1}{2}\,{{\rm e}^{t}}&0&{{\rm e}^{t}}
     \end {array}
 \right].
$

\item $(\mathfrak{h}_{4},\omega_{-}) \longrightarrow  (\mathfrak{d}_{4,\frac{1}{2}} , \omega)$: $g_{t}\cdot \mu_{24 } \xrightarrow[{t \to +\infty}]{} \mu_{20}(\frac{1}{2})$ with $g_{t} = \operatorname{diag}(1,{{\rm e}^{-\frac{1}{2}\,t}},1,{{\rm e}^{\frac{1}{2}\,t}})$.

\item $(\mathfrak{h}_{4},\omega_{-}) \longrightarrow (\mathfrak{n}_{4},\omega)$: $g_{t}\cdot \mu_{24 } \xrightarrow[{t \to +\infty}]{} \mu_{7}$ with $g_{t} = \left[ \begin {array}{cccc}
\frac{1}{4}\,{{\rm e}^{2\,t}}&0&0&0\\
 \noalign{\medskip}0&{{\rm e}^{-t}}&0&0\\
 \noalign{\medskip}0&-\frac{1}{4}\,{{\rm e}^{3\,t}}&4\,{{\rm e}^{-2\,t}}&0\\
 \noalign{\medskip}-\frac{1}{16}\,{{\rm e}^{6\,t}}&-\frac{1}{2}\,{{\rm e}^{t}}&0&{{\rm e}^{t}}
 \end {array}
 \right].$

\item $(\mathfrak{r}_{4,0},\omega_{+}) \longrightarrow (\mathfrak{rr}_{3,0},\omega)$: $g_{t}\cdot \mu_{8 } \xrightarrow[{t \to +\infty}]{} \mu_{2}$ with $g_{t} = \operatorname{diag}(1,{{\rm e}^{-\frac{1}{2}\,t}},1,{{\rm e}^{\frac{1}{2}\,t}})$.

\item $(\mathfrak{r}_{4,0},\omega_{+}) \longrightarrow (\mathfrak{n}_{4},\omega)$: $g_{t}\cdot \mu_{ 8} \xrightarrow[{t \to +\infty}]{} \mu_{7}$ with $g_{t} = \left[ \begin {array}{cccc} -{{\rm e}^{2\,t}}&0&0&0
\\ \noalign{\medskip}0&{{\rm e}^{-t}}&0&0\\ \noalign{\medskip}0&-{
{\rm e}^{3\,t}}&-{{\rm e}^{-2\,t}}&0\\ \noalign{\medskip}{{\rm e}^{6\,
t}}&-{{\rm e}^{t}}&0&{{\rm e}^{t}}\end {array} \right]
.$

\item $(\mathfrak{r}_{4,0},\omega_{-}) \longrightarrow (\mathfrak{rr}_{3,0},\omega)$: $g_{t}\cdot \mu_{9 } \xrightarrow[{t \to +\infty}]{} \mu_{2}$ with $g_{t} = \operatorname{diag}(1,{{\rm e}^{-\frac{1}{2}\,t}},1,{{\rm e}^{\frac{1}{2}\,t}}).$

\item $(\mathfrak{r}_{4,0},\omega_{-}) \longrightarrow (\mathfrak{n}_{4},\omega)$: $g_{t}\cdot \mu_{ 9} \xrightarrow[{t \to +\infty}]{} \mu_{7}$ with $g_{t} =
\left[ \begin {array}{cccc}
{{\rm e}^{2\,t}}&0&0&0\\
\noalign{\medskip}0&{{\rm e}^{-t}}&0&0\\
\noalign{\medskip}0&-{{\rm e}^{3\,t}}&{{\rm e}^{-2\,t}}&0\\
\noalign{\medskip}-{{\rm e}^{6\,t}}&{{\rm e}^{t}}&0&{{\rm e}^{t}}
\end {array} \right].$

\item $(\mathfrak{d}_{4,2} , \omega_{1}) \longrightarrow  (\mathfrak{n}_{4},\omega)$: $g_{t}\cdot \mu_{ 17} \xrightarrow[{t \to +\infty}]{} \mu_{7}$ with $g_{t} =
\left[
\begin {array}{cccc}
0&-1&0&0\\
\noalign{\medskip}{{\rm e}^{2\,t}}&{{\rm e}^{t}}&0&0\\
\noalign{\medskip}0&\frac{1}{6}\,{{\rm e}^{3\,t}}&{{\rm e}^{-t}}&-1\\
\noalign{\medskip}\frac{1}{2}\,{{\rm e}^{3\,t}}&\frac{1}{2}\,{{\rm e}^{2\,t}}&{{\rm e}^{-2\,t}}&0
\end {array} \right]
.$

\item $(\mathfrak{r}_{4,-1},\omega) \longrightarrow  (\mathfrak{r}_{4,-1,-1},\omega)$: $g_{t}\cdot \mu_{ 10} \xrightarrow[{t \to +\infty}]{} \mu_{11}(-1)$ with $g_{t} =
\left[ \begin {array}{cccc}
1&0&0&0\\
\noalign{\medskip}0&0&0&-{{\rm e}^{t}}\\
\noalign{\medskip}0&0&1&0\\
\noalign{\medskip}0&{{\rm e}^{-t}}&0&0
\end {array} \right]
$

\item $(\mathfrak{r}_{4,-1},\omega) \longrightarrow (\mathfrak{n}_{4},\omega)$: $g_{t}\cdot \mu_{ 10} \xrightarrow[{t \to +\infty}]{} \mu_{7}$ with $g_{t} = \left[ \begin {array}{cccc}
-2\,{{\rm e}^{t}}&0&0&0\\
\noalign{\medskip}0&-4\,{{\rm e}^{2\,t}}&0&0\\
\noalign{\medskip}0&-4\,{{\rm e}^{4\,t}}&-\frac{1}{2}\,{{\rm e}^{-t}}&0\\
\noalign{\medskip}-2\,{{\rm e}^{3\,t}}&4\,{{\rm e}^{3\,t}}&0&-\frac{1}{4}\,{{\rm e}^{-2\,t}}
\end {array} \right].
$

\item $(\mathfrak{d}_{4,\frac{1}{2}},\omega) \longrightarrow (\mathfrak{rh}_{3},\omega)$: $g_{t}\cdot \mu_{20}(\frac{1}{2}) \xrightarrow[{t \to +\infty}]{} \mu_{1}$ with $g_{t} =  \left[ \begin {array}{cccc} {{\rm e}^{-t}}&0&0&0\\ \noalign{\medskip}0
&1&0&0\\ \noalign{\medskip}0&-2\,{{\rm e}^{2\,t}}&{{\rm e}^{t}}&0
\\ \noalign{\medskip}-2\,{{\rm e}^{t}}&0&0&1\end {array} \right].$

\item $(\mathfrak{r}_{4,-1,-1},\omega) \longrightarrow  (\mathfrak{rh}_{3},\omega)$: $g_{t}\cdot \mu_{11 }(-1) \xrightarrow[{t \to +\infty}]{} \mu_{1}$ with $g_{t} =
\left[ \begin {array}{cccc}
0&1&0&0\\
\noalign{\medskip}-2\,{{\rm e}^{t}}&0&0&0\\
\noalign{\medskip}0&-{{\rm e}^{t}}&0&1\\
\noalign{\medskip}0&0&-\frac{1}{2}\,{{\rm e}^{-t}}&0
\end {array} \right].$

\item $(\mathfrak{d}_{4,\lambda},\omega) \longrightarrow (\mathfrak{n}_{4},\omega)$ with $\lambda \neq \frac{1}{2},1,2$: $g_{t}(\lambda)\cdot \mu_{ 20}(\lambda) \xrightarrow[{t \to +\infty}]{} \mu_{7}$ with $g_{t}(\lambda) =
\left[ \begin {array}{cccc}
0&-1&0&0\\
\noalign{\medskip}{{\rm e}^{2\,t}} \left( -1+\lambda \right) &{{\rm e}^{t}}&0&0\\
\noalign{\medskip}-{\frac {{{\rm e}^{4\,t}} \left( -1+\lambda \right) ^{2}}{\lambda}}&{\frac { \left( -1+\lambda \right) ^{2}{
{\rm e}^{3\,t}}}{\lambda\, \left( -1+2\,\lambda \right) }}&{\frac {{
{\rm e}^{-t}}}{-1+\lambda}}&-1\\
\noalign{\medskip}0&{\frac {{{\rm e}^{2\,t}} \left( -1+\lambda \right) }{\lambda}}&{\frac {{{\rm e}^{-2\,t}
}}{-1+\lambda}}&0
\end {array} \right].$

\item $(\mathfrak{d}{'}_{4,\delta}, \omega_{\pm}) \longrightarrow (\mathfrak{n}_{4},\omega)$:

$g_{t}(\pm) \cdot \mu_{21(+)|22(-)}(\delta) \xrightarrow[{t \to +\infty}]{} \mu_{7}$ with $g_{t}(\pm) =
\left[ \begin {array}{cccc}
-\frac{1}{4}\,{{\rm e}^{\frac{1}{2}\,t}} \left( 4+{\delta}^{2} \right) &0&0&0\\
\noalign{\medskip}0&{{\rm e}^{-\frac{1}{4}\,t}}&0&0 \\
\noalign{\medskip}0&-\frac{1}{4}\,{{\rm e}^{\frac{3}{4}\,t}} \left( 4+{\delta}^{2} \right) & -4\,{\frac {{{\rm e}^{-\frac{1}{2}\,t}}}{4+{\delta}^{2}}}&0\\ \noalign{\medskip}\frac{1}{16}\,{{\rm e}^{\frac{3}{2}\,t}} \left( 4+{\delta}^{2} \right) ^{2}& \pm \frac{1}{2} \,  {{\rm e}^{\frac{1}{4}\,t}}\delta&0&{{\rm e}^{\frac{1}{4}\,t}}
\end {array} \right].$

\item $(\mathfrak{r}_{4,-1,\beta},\omega)  \longrightarrow (\mathfrak{n}_{4},\omega)$ with $\beta \neq -1$:
$g_{t}(\beta) \cdot \mu_{11 }(\beta) \xrightarrow[{t \to +\infty}]{} \mu_{7}$ with $g_{t}(\beta) =
\left[ \begin {array}{cccc}
0&-1&0&0\\
\noalign{\medskip}{{\rm e}^{2\,t}} \left( -1+\beta \right) &{{\rm e}^{t}}&0&0\\
\noalign{\medskip}\frac{1}{2}\,{\frac {{{\rm e}^{4\,t}} \left( -1+\beta \right) ^{3}}{\beta+1}}&\frac{1}{2}\,{\frac {{{\rm e}^{3\,t}} \left( -1+\beta \right) ^{2}}{\beta+1}}&{\frac {{{\rm e}^{-t}}}{-1+\beta}}&-1\\
\noalign{\medskip}\frac{1}{2}\,{\frac {{{\rm e}^{3\,t}} \left( -1+\beta \right) ^{2} \left( \beta-3 \right) }{\beta+1}}&-{\frac {{{\rm e}^{2\,t}} \left( -1+\beta \right) }{\beta+1}}&{\frac {{{\rm e}^{-2\,t}}}{-1+\beta}}&0
\end {array} \right].$

\item $(\mathfrak{r}_{4,\alpha,-\alpha},\omega) \longrightarrow (\mathfrak{n}_{4},\omega)$: $g_{t}(\alpha)\cdot \mu_{12}(\alpha) \xrightarrow[{t \to +\infty}]{} \mu_{7}$ with $g_{t}(\alpha) =
\left[ \begin {array}{cccc}
0&-1&0&0\\
\noalign{\medskip}-{\frac {{{\rm e}^{2\,t}} \left( \alpha+1 \right) }{\alpha}}&{{\rm e}^{t}}&0&0\\
\noalign{\medskip}-\frac{1}{2}\,{\frac {{{\rm e}^{4\,t}} \left( \alpha+1 \right) ^{3}}{{\alpha}^{2} \left( -1+\alpha \right) }}&\frac{1}{2}\,{\frac {{
{\rm e}^{3\,t}} \left( \alpha+1 \right) ^{2}}{\alpha\, \left( -1+\alpha \right) }}&-{\frac {{{\rm e}^{-t}}\alpha}{\alpha+1}}&-1\\ \noalign{\medskip}-\frac{1}{2}\,{\frac {{{\rm e}^{3\,t}} \left( \alpha+1 \right) ^{2} \left( 1+3\,\alpha \right) }{{\alpha}^{2} \left( -1+
\alpha \right) }}&{\frac {{{\rm e}^{2\,t}} \left( \alpha+1 \right) }{-1+\alpha}}&-{\frac {{{\rm e}^{-2\,t}}\alpha}{\alpha+1}}&0
\end {array}
 \right].$

\item $(\mathfrak{r}{'}_{4,0,\delta},\omega_{\pm}) \longrightarrow (\mathfrak{n}_{4},\omega)$: $g_{t}(\pm,\delta)\cdot \mu_{13(+)|14(-)} \xrightarrow[{t \to +\infty}]{} \mu_{7}$ with $g_{t}(\pm,\delta) = \left[
\begin {array}{cccc}
{\frac {\mp{{\rm e}^{\frac{1}{2}\,t}} \left( {\delta}^{2}+1 \right) }{\delta}}&0&0&0\\
\noalign{\medskip}0&{{\rm e}^{-\frac{1}{4}\,t}}&0&0\\
\noalign{\medskip}0&-{\frac {{{\rm e}^{\frac{3}{4}\,t}} \left( {\delta}^{2}+1 \right) }{{\delta}^{2}}}&
{\frac {\mp{{\rm e}^{-\frac{1}{2}\,t}}\delta}{{\delta}^{2}+1}}&0\\
\noalign{\medskip}{\frac {\pm{{\rm e}^{\frac{3}{2}\,t}} \left( {\delta}^{2}+1 \right) ^{2}}{{\delta}^{3}}}&{\frac {\mp{{\rm e}^{\frac{1}{4}\,t}}}{\delta}}&0&{{\rm e}^{\frac{1}{4}\,t}}
\end {array} \right].
$

\item $(\mathfrak{rr}_{3,-1},\omega)\longrightarrow (\mathfrak{n}_{4},\omega) $: $g_{t}\cdot \mu_{3 } \xrightarrow[{t \to +\infty}]{} \mu_{7}$ with $g_{t} = \left[ \begin
{array}{cccc} 0&-1&0&0\\
\noalign{\medskip}-{{\rm e}^{2\,t}}&{{\rm e}^{t}}&0&0\\
\noalign{\medskip}0&\frac{1}{2}\,{{\rm e}^{3\,t}}&-{{\rm e}^{-t}}&-1\\
\noalign{\medskip}-{{\rm e}^{3\,t}}&{{\rm e}^{2\,t}}&-{{\rm e}^{-2\,t}}&0\end {array} \right].$

\item $(\mathfrak{rr}{'}_{3,0},\omega) \longrightarrow (\mathfrak{n}_{4},\omega) $: $g_{t}\cdot \mu_{4 } \xrightarrow[{t \to +\infty}]{} \mu_{7}$ with $g_{t} =
\left[ \begin {array}{cccc}
{{\rm e}^{\frac{1}{2}\,t}}&0&0&0 \\
\noalign{\medskip}0&{{\rm e}^{-\frac{1}{4}\,t}}&0&0\\
\noalign{\medskip}0&-{{\rm e}^{\frac{3}{4}\,t}}&{{\rm e}^{-\frac{1}{2}\,t}}&0\\
\noalign{\medskip}-{{\rm e}^{\frac{3}{2}\,t}}&0&0&{{\rm e}^{\frac{1}{4}\,t}}
\end {array} \right].$

 \item $(\mathfrak{n}_{4},\omega) \longrightarrow  (\mathfrak{rh}_{3},\omega)$: $g_{t}\cdot \mu_{7 } \xrightarrow[{t \to +\infty}]{1} \mu_{}$ with $g_{t} =  \left[ \begin {array}{cccc} {{\rm e}^{t}}&0&0&0\\ \noalign{\medskip}0
&{{\rm e}^{t}}&0&0\\ \noalign{\medskip}0&0&{{\rm e}^{-t}}&0
\\ \noalign{\medskip}0&-{{\rm e}^{2\,t}}&0&{{\rm e}^{-t}}\end {array}
 \right].$

 \item $(\mathfrak{rh}_{3},\omega) \longrightarrow (\mathfrak{a}_{4},\omega)$: $g_{t}\cdot \mu_{1 } \xrightarrow[{t \to +\infty}]{} \mu_{0}$ with $g_{t} = \operatorname{diag}(1,{{\rm e}^{t}},1,{{\rm e}^{-t}}).$
\end{itemize}

\end{document}